\newtheorem{theorem}{Theorem}[section]
\newtheorem{lemma}{Lemma}[section]
\newtheorem{remark}{Remark}[section]
\newcommand{\inp}[1]{\left\langle#1\right\rangle}
\newcommand{\norm}[1]{\left\Vert#1\right\Vert}
\def\bbr{{\Bbb{R}}} 
\author{Bin Zhou \footnote{
			School of Science, Nanjing University of Posts and Telecommunications/Key Laboratory of NSLSCS, Ministry of Education,
		Nanjing, 210023, China, 415281@njupt.edu.cn} \and 
	Liusheng Hou
	\footnote{
		School of Information Engineering, Nanjing Xiaozhuang University, Nanjing 210023, China, houlsheng@njxzc.edu.cn} \and 
	Xingju Cai
	\footnote{Key Laboratory of NSLSCS, Ministry of Education, School of Mathematical Sciences, Nanjing Normal University, Nanjing 210023, China, caixingju@njnu.edu.cn} \and
	Hailin Sun
	\footnote{Corresponding author, 
		Key Laboratory of NSLSCS, Ministry of Education,  School of Mathematical Sciences,  Nanjing Normal University, Nanjing 210023, China, hlsun@njnu.edu.cn}}
\title{The G\"uler-type acceleration for proximal gradient, linearized augmented Lagrangian and  linearized alternating direction method of multipliers}
\begin{document}
	\maketitle
	
	\begin{abstract}
		In this paper, we introduce the G\"uler-type acceleration technique and utilize it to  propose three acceleration algorithms: the G\"uler-type accelerated proximal gradient method (GPGM), the G\"uler-type accelerated  linearized augmented Lagrangian method (GLALM) and the G\"uler-type accelerated linearized alternating direction method of multipliers (GLADMM). The key idea behind these algorithms is to fully leverage  the information of negative term \bm{$-\norm{x^k-\hat{x}^{k-1}}^2$} in order to design the extrapolation step. This concept of using negative terms to improve acceleration can be extended to other algorithms as well. Moreover, the proposed GLALM and GLADMM enable simultaneous acceleration of both  primal and dual variables. Additionally, GPGM and GLALM achieve the same convergence rate of $O(\frac{1}{k^2})$ with some existing results. Although GLADMM achieves the same total convergence rate of $O(\frac{1}{N})$ as in  existing results, the partial convergence rate is improved from $O(\frac{1}{N^{3/2}})$ to $O(\frac{1}{N^2})$. To validate the effectiveness of our algorithms, we conduct numerical experiments on various problem instances, including the $\ell_1$ regularized logistic regression, quadratic programming, and compressive sensing. The experimental results  indicate that our algorithms outperform existing methods in terms of efficiency.  This also demonstrates the potential of the stochastic algorithmic versions of these algorithms in application areas such as statistics, machine learning, and data mining. Finally, it is worth noting that this paper aims to introduce how Güler's acceleration technique can be applied to gradient-based algorithms and to provide a unified and concise framework for their construction. 
	\end{abstract}
	
\textbf{Keywords:}\quad G\"uler-type acceleration; Convergence rate; Proximal gradient; Linearized augmented Lagrangian;  Linearized alternating direction method \newline 

\textbf{AMS subject classifications:}\; 90C25; 90C30; 68W40 \newline

	\section{Introduction}
		Nesterov's acceleration, proposed by Nesterov in 1983 \cite{nesterov1983method}, is a renowned accelerated gradient method utilized for solving convex smooth unconstrained optimization problems. It has gained significant popularity due to its ability to expedite convergence, reduce the required number of iterations to achieve a solution, and enhance overall optimization performance. Inspired by  Nesterov \cite{nesterov1983method}, G\"uler \cite{guler1992new}  proposed an acceleration for proximal point algorithm (PPA), which is commonly referred to as G\"uler's acceleration.
	Note that the Nesterov's acceleration can also apply to PPA directly, and our idea of the G\"uler-type acceleration is coming from the difference between the two kinds of acceleration techniques.
	
	To explain our idea, we first compare two kinds of acceleration techniques on PPA. Consider the following convex optimization problem:
	\begin{equation}\label{COP}
	\underset{x \in X}{\min} ~g(x),
	\end{equation}
	where  $g:\mathbb{R}^n \rightarrow \mathbb{R}$ is a proper closed convex function, and $X \subseteq \bbr^n$ is a closed convex set. To solve problem \eqref{COP}, the classical proximal point algorithm (PPA) has been well studied in the literature under various contexts and frameworks, including various modifications (see e.g., \cite{he2009proximal,guler1992new,kim2021accelerated,guo2020nonsymmetric,rockafellar1976augmented,cai2022developments}). Nesterov's acceleration technique \cite{nesterov1983method,nesterov2007dual} can be applied to accelerate the PPA in the following form:
	\begin{equation*}
	\begin{aligned}
	&\hat{x}^k=x^k+\frac{t_{k-1}-1}{t_k}\left(x^k-x^{k-1}\right),\\
	&x^{k+1}=\underset{x \in X}{\arg\min}\left\{g(x)+\frac{1}{2\mu}\norm{x-\hat{x}^k}^2\right\},\\
	&t_{k+1}=\frac{1+\sqrt{1+4t^2_{k}}}{2}\notag,
	\end{aligned}
	\end{equation*}
	while the formulation of  G\"uler's acceleration \cite{guler1992new} on PPA is
	\begin{equation*}
	\begin{aligned}
	&\hat{x}^k=x^k+\frac{t_{k-1}-1}{t_k}\left(x^k-x^{k-1}\right)+\frac{t_{k-1}}{t_k}\left(x^k-\hat{x}^{k-1}\right),\\
	&x^{k+1}=\underset{x \in X}{\arg\min}\left\{g(x)+\frac{1}{2\mu}\norm{x-\hat{x}^k}^2\right\},\\
	&t_{k+1}=\frac{1+\sqrt{1+4t^2_{k}}}{2}\notag.
	\end{aligned}
	\end{equation*}
	G\"uler's acceleration,  in comparison to Nesterov's acceleration, has the advantage of fully utilizing the information of $-\norm{x^k-\hat{x}^{k-1}}^2$ for extrapolation design, which we named G\"uler-type acceleration technique, while Nesterov's acceleration overlooks this information. This may be the reason why we do not frequently use Nesterov's acceleration in PPA.
	
	Moreover,  Nesterov's acceleration has found widespread application in various algorithms, such as the proximal gradient method (PGM)\cite{beck2009fast}, linearized augmented Lagrangian method (LALM)\cite{xu2017accelerated}, and linearized alternating direction method of multipliers (L-ADMM)\cite{ouyang2015accelerated}, within important domains like statistics\cite{wen2017linear,vaswani2019fast}, machine learning\cite{chen2014optimal,lin2020accelerated}, and data mining\cite{sreedharan2024nesterov,wu2022fast}. Drawing on G\"uler's acceleration scheme, Kim and Fessler \cite{kim2016optimized} proposed two optimized first-order methods that achieve an $O(\frac{1}{k^2})$ convergence rate for smooth and unconstrained convex minimization. More recently, Jang et al. \cite{jang2025computer} introduced a double-function, step-size-optimization PEP framework for composite problems. 
	These motivate us to utilize the information provided by $-\norm{x^k-\hat{x}^{k-1}}^2$, 
	and explore the potential of G\"uler-type acceleration technique in various gradient-type algorithms.

	The PGM is a commonly used approach for solving the optimization problem:
	\begin{equation}\label{FGP}
	\min_{x \in X} \; F(x):=f(x)+g(x),
	\end{equation}
	where $X \subseteq \bbr^n$ is a closed convex set, $f:\mathbb{R}^n \rightarrow \mathbb{R}$ is smooth convex function such that, for some $L\geq 0$, 
	\begin{equation}\label{PG_L}
	f(x')\leq f(x)+\inp{\nabla f(x),x'-x}+\frac{L}{2}\norm{x'-x}^2,
	\end{equation}
	$g:\mathbb{R}^n \rightarrow \mathbb{R}$ is a relatively simple, proper, convex, lower semicontinuous function. (refer to    \cite{bauschke2011convex,combettes2005signal,daubechies2004iterative} for details).   The fast iterative shrinkage-thresholding algorithm (FISTA), proposed by Beck and Teboulle \cite{beck2009fast}, is a popular acceleration technique for PGM. The second acceleration PGM (APGM) \cite[Section 3.1]{tseng2010approximation} corresponding to Auslender and Teboulle‘s extension \cite{auslender2006interior} of Nesterov's second method \cite{nesterov1988approach}
	(i.e. Nesterov's second APGM). They utilize Nesterov's extrapolation techniques and achieves a faster convergence rate of $O\left(\frac{1}{k^2}\right)$. 
	Our first aim is to utilize the information provided by $-\norm{x^k-\hat{x}^{k-1}}^2$ in \eqref{GPGeq2}, design a G\"uler-type accelerated PGM (GPGM, Algorithm \ref{GPG}) and compare its performance with Nesterov's second APGM. 
	
	LALM  can be applied to solve linearly constrained composite convex programming as follows \cite{xu2017accelerated}:
	\begin{equation}\label{LCP}
	\begin{aligned} 
	\min_{x\in X} F(x):=f(x)+g(x)\text {, s.t. } Ax=b,
	\end{aligned}
	\end{equation}
	where $A\in \bbr^{l\times n}$, $b\in \bbr^l$, the setting of $X$, $f$ and $g$ are same as in \eqref{FGP}. Several accelerated LALM methods based on Nesterov's acceleration have been proposed for solving problem \eqref{LCP} \cite{xu2017accelerated,boct2023fast,arjevani2020ideal,hassan2019augmented}.   In contrast to existing accelerated LALM, we propose a G\"uler-type accelerated LALM (GLALM, Algorithm \ref{GALALM}). This approach fully utilizes the information of $-\|x^k-\hat{x}^{k-1}\|^2$ and $-\|z^k-\hat{z}^{k-1}\|^2$ in \eqref{GALeq3} and incorporates G\"uler-type acceleration for both primal and dual variables. 
	
	We also consider the linearly constrained two-block structured problems, which can be solved using the L-ADMM \cite{ouyang2015accelerated}. The problem is defined as follows:
	\begin{equation}\label{AECCO}
	\min _{x \in X, y \in Y} F(x,y)=f(x)+g(y) \text {, s.t. } B y-A x=b,
	\end{equation}
	where  $Y\subset \bbr^m$ is closed convex set, $B\in \bbr^{l\times m}$, and the setting of $A$, $b$, $X$, $f$ and $g$ are same as in \eqref{LCP}. Nesterov's acceleration has been used to accelerate L-ADMM \cite{xu2017accelerated,ouyang2015accelerated,li2019accelerated,kadkhodaie2015accelerated}. 
	Similar to GLALM, we fully utilize the information of $-\|x^k-\hat{x}^{k-1}\|^2$, $-\|y^k-\hat{y}^{k-1}\|^2$, and $-\|z^k-\hat{z}^{k-1}\|^2$ in \eqref{GADMMeq6} to design the G\"uler-type acceleration for both primal and dual variables, and propose a G\"uler-type accelerated L-ADMM (GLADMM, Algorithm \ref{GADMM}) to solve problem \eqref{AECCO}.

	The main contributions of this paper can be summarized as follows.
	\begin{itemize}
		\item We introduce the G\"uler-type acceleration technique and utilize it to propose three acceleration algorithms, namely GPGM, GLALM, and GLADMM. The key idea is to expedite the algorithms by fully utilizing the information from negative terms ($-\| \hat{x}^k - x^{k+1} \|^2$, $-\| \hat{y}^k - y^{k+1} \|^2$ and $-\| \hat{z}^k - z^{k+1} \|^2$) in GPGM (Algorithm \ref{GPG}), GLALM (Algorithm \ref{GALALM}), and GLADMM (Algorithm \ref{GADMM}).
		
		\item With G\"uler-type acceleration techniques, both GPGM and GLALM achieve the same convergence rate as Nesterov's second APGM and  accelerated linearized augmented Lagrangian method (ALALM, \cite[Algorithm 1]{xu2017accelerated}) while providing a broader range of coefficient options. 
		Additionally, GLADMM achieves the same total convergence rate of $O(\frac{1}{N})$ as in accelerated L-ADMM (AL-ADMM)  \cite{ouyang2015accelerated}, but with an improved partial convergence rate from $O(\frac{1}{N^{3/2}})$ to $O(\frac{1}{N^2})$.

		\item Compared to existing algorithms such as ALALM and AL-ADMM  \cite{ouyang2015accelerated}, the proposed GLALM and GLADMM can simultaneously accelerate both the primal and dual variables.
	\end{itemize}

	\begin{center}
		\resizebox{\textwidth}{!}{
	\begin{threeparttable}[htbp]
		\label{Table 0}
		\caption{Comparing the convergence rates of GPG and second Nesterov's PG, GLALM and ALALM, GLADMM and AL-ADMM, respectively.}
		\belowrulesep=0pt\aboverulesep=0pt
		\begin{tabular}{ccccccc}
			\toprule\toprule
			Algorithm & GPG (Algorithm \ref{GPG}) &  second Nesterov's PG\cite{tseng2010approximation} \\ 
			\hline
			Convergence rate        &$\frac{2\gamma}{2-\alpha}\frac{\norm{x^1-x^*}^2}{(k+1)^2}$, $\alpha\in(0,1)$ &$\frac{\norm{x^1-x^*}^2}{(k+1)^2}$  \\
			\hline
			Algorithm & GLALM (Algorithm \ref{GALALM}) & ALALM\cite{xu2017accelerated} \\
			\hline
			Convergence rate & $\frac{1}{k(k+1)}\left(\frac{\eta}{2-\alpha}\left\|x^1-x^*\right\|^2+\frac{\max \left\{\left(1+\left\|z^*\right\|\right)^2, 4\left\|z^*\right\|^2\right\}}{\gamma\kappa}\right)$, $\alpha\in(0,1)$, $\kappa\in (1,2)$  &$\frac{1}{k(k+1)}\left(\eta\left\|x^1-x^*\right\|^2+\frac{\max \left\{\left(1+\left\|z^*\right\|\right)^2, 4\left\|z^*\right\|^2\right\}}{\gamma}\right)$  \\
			\hline
			Algorithm & GLADMM (Algorithm \ref{GADMM}) & AL-ADMM\cite{ouyang2015accelerated}\\
			\hline
			Convergence rate  & $O(\frac{1}{N^2})+O(\frac{1}{N})+O(\frac{1}{N})$ &$O(\frac{1}{N^{3/2}})+O(\frac{1}{N})+O(\frac{1}{N})$ \\
			\bottomrule
			\bottomrule
		\end{tabular}
	\end{threeparttable}
}
\end{center}
	
	The remainder of the paper is structured as follows:  In Sections 2-4, we introduce the GPGM, GLALM, and GLADMM algorithms along with their convergence analysis. Section 5 presents the results of numerical experiments. Finally, we conclude the paper in Section 6. 
	
	\section{Notation and Preliminaries}
	Throughout this paper, we adopt the following standard notation. Let $\mathbb{R}^{n}$ denote an $n$-dimensional Euclidean space.
	For vectors $x,y\in \mathbb{R}^{n}$, $\inp{x,y}$ and $\norm{x}=\sqrt{\inp{x,x}}$ denote the standard inner product and the Euclidean norm, respectively. 
	For any $x \in \mathbb{R}$, we define $[x]_{+}$ as $\max \{x, 0\}$. 
	We denote the gradient of any convex function $f(x)$ by $\nabla f(x)$. The diameter of a bounded set $X$ is denoted by $\Omega_X$
	(i.e. $\Omega_X:=\sup_{x,x'\in X}\norm{x-x'}$). For brevity, we use “w.r.t.” as an abbreviation for “with respect to”.
	
To support our convergence analysis, we introduce several key lemmas below. These results will be frequently referenced in the sequel when analyzing the convergence behavior of the proposed algorithms.  
	\begin{lemma}[{\cite[Chapter 5]{bauschke2011convex}}]\label{fact}
		Let  $a,b,c,d\in \bbr^n$, and $t\in \bbr$. Then
		\begin{itemize}
			\item[(i)] $(2-t)\left(\norm{a+b}^2-\norm{a}^2-(1-t)\norm{b}^2\right)=\norm{a+b}^2-\norm{a-(1-t)b}^2$;
			\item[(ii)] $t\left(\norm{a+b}^2-\norm{a}^2-(t-1)\norm{b}^2\right)=\norm{a+b}^2-\norm{a-(t-1)b}^2$;
			\item[(iii)] $t(t-1)\norm{a}^2+t\norm{b}^2=\norm{(t-1)a+b}^2+(t-1)\norm{a-b}^2$;
			\item[(iv)] $2\inp{a-b,c-d}=\norm{a-d}^2-\norm{a-c}^2+\norm{b-c}^2-\norm{b-d}^2$.
		\end{itemize}
	\end{lemma}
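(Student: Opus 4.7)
The plan is to prove all four identities by direct algebraic expansion, since each is a purely algebraic statement in a real inner-product space whose verification reduces to book-keeping of inner products via bilinearity of $\inp{\cdot,\cdot}$ and the expansion $\norm{u\pm v}^2=\norm{u}^2\pm 2\inp{u,v}+\norm{v}^2$. No convexity, closedness, or topological property is involved, so the lemma is essentially a collection of polarization-style identities.

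I would start with (iv), the cleanest one. Expanding each of the four squared norms on the right-hand side, the self-terms $\norm{a}^2,\norm{b}^2,\norm{c}^2,\norm{d}^2$ cancel pairwise, and the surviving cross terms collapse to $2(\inp{a,c}-\inp{a,d}-\inp{b,c}+\inp{b,d})$, which factors as $2\inp{a-b,c-d}$. This is the standard four-point polarization identity and serves as a warm-up for the same expansion pattern used below.

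For (i) and (ii), which are structurally parallel, I would reduce both sides to linear combinations of $\inp{a,b}$ and $\norm{b}^2$, since the $\norm{a}^2$ contributions cancel automatically on each side. In (i), the inner parenthesis simplifies to $2\inp{a,b}+t\norm{b}^2$, so the left side equals $2(2-t)\inp{a,b}+t(2-t)\norm{b}^2$; expanding $\norm{a-(1-t)b}^2$ on the right and simplifying $1-(1-t)^2=t(2-t)$ reproduces these coefficients exactly. For (ii) the same template applies: the inner parenthesis becomes $2\inp{a,b}+(2-t)\norm{b}^2$, multiplying by $t$ gives $2t\inp{a,b}+t(2-t)\norm{b}^2$, and the right-hand side expansion using $1-(t-1)^2=t(2-t)$ matches. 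For (iii), expanding $\norm{(t-1)a+b}^2+(t-1)\norm{a-b}^2$ makes the $\inp{a,b}$ cross terms cancel, while the coefficients of $\norm{a}^2$ and $\norm{b}^2$ collapse to $(t-1)^2+(t-1)=t(t-1)$ and $1+(t-1)=t$, matching the left side.

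There is no substantive obstacle here. The only point of care is sign-tracking: identities (i) and (ii) differ merely through $(1-t)$ versus $(t-1)$ inside the second norm on the right, and it is tempting to deduce one from the other by a parameter substitution, which obscures the outer factor. I would therefore treat the two as independent verifications to avoid sign errors, and organize the write-up in the order (iv), (i), (ii), (iii) so that the polarization expansion is performed in its simplest form first.
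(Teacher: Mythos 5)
Your proposal is correct: all four identities check out under direct expansion via $\norm{u\pm v}^2=\norm{u}^2\pm2\inp{u,v}+\norm{v}^2$, and the coefficient bookkeeping you describe (e.g.\ $1-(1-t)^2=t(2-t)$ for (i), $1-(t-1)^2=t(2-t)$ for (ii), and the cancellation of the $\inp{a,b}$ terms in (iii)) is exactly right. The paper itself supplies no proof for this lemma, only a citation to the reference, so your elementary verification is the natural argument and there is nothing to compare against.
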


	\begin{lemma}\label{factt} 
		If $t_0=0$ and $t_{k+1}=\cfrac{1+\sqrt{1+4t^2_{k}}}{2}$ for $k\geq 1$, we have 
		\begin{equation*}
		t_{k-1}^2=t_k(t_k-1) ~~~\text{and}~~~t_k\geq \frac{k+1}{2},~~~\forall k\geq 1. 
		\end{equation*}
	\end{lemma}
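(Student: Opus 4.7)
The lemma makes two assertions: an algebraic identity between consecutive terms and a lower bound on the sequence. I would prove them in that order, since the identity also streamlines the induction used for the lower bound.

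For the identity $t_{k-1}^2 = t_k(t_k-1)$, the plan is to invert the defining recurrence by isolating the square root. Starting from $t_k = \frac{1+\sqrt{1+4t_{k-1}^2}}{2}$, I would write $2t_k-1 = \sqrt{1+4t_{k-1}^2}$ and square both sides to obtain $4t_k^2 - 4t_k + 1 = 1 + 4t_{k-1}^2$, which rearranges directly into $t_k(t_k-1) = t_{k-1}^2$. The only subtlety is to check that $2t_k - 1 \geq 0$, so that squaring preserves equivalence; this is immediate because the recurrence gives $t_k \geq \tfrac{1}{2}$ for every $k$. I would also verify the consistency of the initial conditions: with $t_0 = 0$, the recurrence forces $t_1 = 1$, so the identity holds trivially at $k=1$.

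For the lower bound $t_k \geq \frac{k+1}{2}$, I would argue by induction on $k$. The base case $k = 1$ reduces to $t_1 = 1 \geq 1$, already noted above. For the inductive step, assuming $t_k \geq \frac{k+1}{2}$, I would estimate
\[
t_{k+1} \;=\; \frac{1+\sqrt{1+4t_k^2}}{2} \;\geq\; \frac{1+\sqrt{1+(k+1)^2}}{2} \;\geq\; \frac{1+(k+1)}{2} \;=\; \frac{k+2}{2},
\]
using monotonicity of the square root together with the elementary bound $\sqrt{1+(k+1)^2} \geq k+1$. Alternatively, one could substitute the already-established identity $t_{k+1}^2 = t_{k+1} + t_k^2$ and obtain the same conclusion by solving the resulting quadratic; either route is routine.

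Neither step presents a genuine obstacle, so I would not expect a bottleneck. If anything, the most error-prone point is the housekeeping at the base of the induction: one must be careful that the convention $t_0 = 0$ combined with the stated recurrence indeed yields $t_1 = 1$ (rather than $t_1$ being left undefined), because the identity at $k=1$ admits the spurious root $t_1 = 0$ and the lower bound fails at $t_1 = 0$. Once this is settled, the proof is a two-line calculation followed by a one-line induction.
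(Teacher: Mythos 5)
Your proof is correct. The paper states this lemma without proof (it is the standard identity for the FISTA/Nesterov step-size sequence), and your argument --- isolating the square root in the recurrence to obtain $t_k(t_k-1)=t_{k-1}^2$, then a one-line induction for $t_k\geq\frac{k+1}{2}$ --- is exactly the standard derivation the paper implicitly relies on, including the correct observation that the recurrence applied at $k=1$ with $t_0=0$ yields $t_1=1$, consistent with the initialization in Algorithm~\ref{GPG}.
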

This sequence $\{t_k\}$ is commonly used in Nesterov-type acceleration schemes and plays a crucial role in achieving optimal convergence rates. 
The following lemmas provide essential tools for bounding both the optimality gap and the feasibility residual in constrained optimization problems.	
	\begin{lemma}[{\cite[Lemma 2.1]{xu2017accelerated}}]\label{rate1}
		Given a function $\phi$ and a fixed point $\bar{x}$, if for any $z$ it holds that
		\[
		F(\bar{x})-F\left(x^*\right)-\inp{z, A \bar{x}-b} \leq \phi(z),
		\]
		then for any $\rho>0$ we have
		$
		F(\bar{x})-F(x^*)+\rho\norm{A \bar{x}-b} \leq \sup _{\norm{z} \leq \rho} \phi(z). 
		$
	\end{lemma}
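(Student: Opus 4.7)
The plan is to argue by a direct supremum-taking argument combined with the Cauchy--Schwarz inequality, noting that the hypothesis is a pointwise upper bound valid for every $z$, so in particular for any cleverly chosen $z$ with $\|z\|\le\rho$. Concretely, I would isolate the $z$-dependent quantity $-\inp{z,A\bar x-b}$ on the left-hand side and observe that on the ball $\{z:\|z\|\le\rho\}$ this linear functional of $z$ attains the value $\rho\|A\bar x-b\|$ at the specific point $z^\star=-\rho\,(A\bar x-b)/\|A\bar x-b\|$ when $A\bar x\neq b$ (and trivially when $A\bar x=b$, since then the feasibility term vanishes and any $z$ in the ball suffices).

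The steps would then be: first, fix $\rho>0$ and choose $z^\star$ as above, so that $\|z^\star\|=\rho$ if $A\bar x\neq b$ and $-\inp{z^\star,A\bar x-b}=\rho\|A\bar x-b\|$. Second, apply the assumed inequality $F(\bar x)-F(x^*)-\inp{z^\star,A\bar x-b}\le\phi(z^\star)$ at this particular $z^\star$; the left-hand side becomes exactly $F(\bar x)-F(x^*)+\rho\|A\bar x-b\|$. Third, since $\|z^\star\|\le\rho$, one has $\phi(z^\star)\le\sup_{\|z\|\le\rho}\phi(z)$, which yields the claimed bound.

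Equivalently, and perhaps more transparently for the write-up, I could take the supremum over $\{z:\|z\|\le\rho\}$ on both sides of the hypothesis simultaneously: the right-hand side becomes $\sup_{\|z\|\le\rho}\phi(z)$ by definition, while the left-hand side becomes $F(\bar x)-F(x^*)+\sup_{\|z\|\le\rho}(-\inp{z,A\bar x-b})=F(\bar x)-F(x^*)+\rho\|A\bar x-b\|$ because $F(\bar x)-F(x^*)$ is independent of $z$ and the dual norm formula gives $\sup_{\|z\|\le\rho}\inp{z,v}=\rho\|v\|$ for any vector $v$.

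There is essentially no serious obstacle here: the only point requiring a line of care is the degenerate case $A\bar x=b$, where the choice of $z^\star$ via normalization is undefined but the inequality to be proved reduces to the hypothesis at $z=0$. Once that corner case is dispatched, the result is immediate from Cauchy--Schwarz (attained on the ball). I would therefore keep the proof to just a few lines, possibly presenting it in the second (take-the-supremum) form since it avoids an explicit case split.
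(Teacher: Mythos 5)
Your argument is correct and is essentially the standard (and only natural) proof of this fact: the paper itself states the lemma without proof, citing Xu's Lemma 2.1, whose proof is exactly your second formulation --- take the supremum over $\left\{z:\norm{z}\leq\rho\right\}$ on both sides and use $\sup_{\norm{z}\leq\rho}\inp{z,v}=\rho\norm{v}$. Your handling of the degenerate case $A\bar{x}=b$ is a fine extra line of care, though the supremum identity already covers it, so no case split is needed.
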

	\begin{lemma}[{\cite[Lemma 2.2]{xu2017accelerated}}]\label{rate2}
		For any $\epsilon \geq 0$, if
		$
		F(\bar{x})-F(x^*)+\rho\norm{A \bar{x}-b} \leq \epsilon, 
		$
		then we have, for any $\rho>\norm{z^*}$,  
		\[
		\norm{A \bar{x}-b} \leq \frac{\epsilon}{\rho-\norm{z^*}}~~\text{and}~~-\frac{\norm{z^*} \epsilon}{\rho-\norm{z^*}} \leq F(\bar{x})-F(x^*)\leq \epsilon. 
		\]
	\end{lemma}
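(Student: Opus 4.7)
The plan is to exploit the standard saddle-point inequality characterizing the primal--dual optimal pair $(x^*, z^*)$ of the linearly constrained problem \eqref{LCP} in order to extract a one-sided bound on $F(\bar{x})-F(x^*)$ that depends only on the feasibility residual $\norm{A\bar{x}-b}$, and then to combine this bound with the hypothesis to isolate first $\norm{A\bar{x}-b}$ and, subsequently, the two-sided estimate on the suboptimality gap.

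First I would invoke the fact that $(x^*,z^*)$ is a saddle point of the Lagrangian $L(x,z) = F(x) - \inp{z, Ax-b}$, so that $F(x^*) \leq F(x) - \inp{z^*, Ax-b}$ for every $x$. Applied at $\bar{x}$ and followed by Cauchy--Schwarz, this yields the key one-sided estimate
\begin{equation*}
F(\bar{x}) - F(x^*) \;\geq\; \inp{z^*, A\bar{x}-b} \;\geq\; -\norm{z^*}\,\norm{A\bar{x}-b}.
\end{equation*}
Plugging this lower bound into the hypothesis $F(\bar{x})-F(x^*)+\rho\norm{A\bar{x}-b} \leq \epsilon$ immediately gives $(\rho - \norm{z^*})\norm{A\bar{x}-b} \leq \epsilon$, whence the assumption $\rho > \norm{z^*}$ produces the first conclusion $\norm{A\bar{x}-b} \leq \epsilon/(\rho-\norm{z^*})$.

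For the second conclusion, the upper bound $F(\bar{x})-F(x^*) \leq \epsilon$ is an immediate consequence of the hypothesis together with the nonnegativity of $\rho\norm{A\bar{x}-b}$. The lower bound then follows by feeding the feasibility estimate just obtained back into the saddle-point inequality, yielding
\begin{equation*}
F(\bar{x}) - F(x^*) \;\geq\; -\norm{z^*}\,\norm{A\bar{x}-b} \;\geq\; -\frac{\norm{z^*}\,\epsilon}{\rho-\norm{z^*}}.
\end{equation*}
The only real subtlety, and thus the main ``obstacle,'' is the implicit requirement that a dual optimal $z^*$ exist so that the saddle-point inequality is available; under the convexity assumptions on $f$, $g$ and $X$ together with a standard constraint qualification this is routine, so no essential difficulty arises and the remainder of the argument is purely algebraic.
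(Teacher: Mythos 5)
Your proof is correct and is exactly the standard argument: the paper itself states this lemma without proof, importing it from the cited reference, and the saddle-point inequality $F(\bar{x})-F(x^*) \geq \inp{z^*, A\bar{x}-b} \geq -\norm{z^*}\norm{A\bar{x}-b}$ combined with the hypothesis is precisely how that reference proves it. Your remark about the existence of $z^*$ is also apt, though in this paper's setting $(x^*,z^*)$ is taken as a given KKT point, so no constraint qualification needs to be invoked.
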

With these essential tools at hand, we proceed to analyze the convergence behavior of the proposed GPGM, GLALM, and GLADMM algorithms in the subsequent sections.

	\section{The G\"uler-type accelerated PGM}
	In this section, we utilize the G\"uler acceleration technique to develop the GPGM for solving \eqref{FGP}. The proposed GPGM is as in Algorithm \ref{GPG}. 
	\begin{algorithm}[h]  
		\caption{G\"uler-type accelerated proximal gradient method (GPGM)}  
		\label{GPG}  
		\begin{algorithmic}[1]  
			\Require  initial point $\hat{x}^1=x^1\in\bbr^n$, $\alpha=\cfrac{L}{\gamma}$, $\gamma$ is a constant, $t_0 = 0$ and $t_1=1$
			\State $k \leftarrow 1$  
			\Repeat  
			\State Generate
			\begin{align}
			&t_{k}=\frac{1+\sqrt{1+4t^2_{k-1}}}{2},\theta_k=\frac{1}{t_{k}},\tau_k=\gamma \theta_k=\frac{L\theta_k}{\alpha}\label{PGpara}\\
			&x^k_{md}=(1-\theta_k)x^k_{ag}+\theta_k \hat{x}^k\label{PGsxd}\\
			&x^{k+1}=\underset{x \in \bbr^n}{\arg\min}\left\{g(x)+\frac{\tau_k}{2}\norm{x-\left(\hat{x}^k-\frac{1}{\tau_k}\nabla f(x^k_{md})\right)}^2\right\}\label{PGxk}\\
			&\bm{\hat{x}^{k+1}=(\alpha-1)\hat{x}^{k}+(2-\alpha)x^{k+1}}\label{PGhatx}\\
			&x^{k+1}_{ag}=(1-\theta_k)x^k_{ag}+\theta_k x^{k+1}\label{PGxag}		
			\end{align}
			\State $k \leftarrow k+1$
			\Until $x^{k+1}_{ag}$ satisfies the stop criterion.
			\Ensure  $x^{k+1}_{ag}$  
		\end{algorithmic}  
	\end{algorithm}

    Note that if, in Algorithm \ref{GPG}, $\alpha=1$ and $\gamma=L$, then $\hat{x}^k=x^k$, and Algorithm \ref{GPG} is the same as the
    Nesterov's second APGM \cite[Section 3.1]{tseng2010approximation}. Furthermore, if $g(x)=0$, it reduces to a version of Nesterov’s accelerated gradient method in \cite{nesterov1988approach}. 
    In what follows, we consider the convergence analysis of the proposed GPGM. 
    \begin{theorem}\label{ThGPG}
    	Let $x^*$ denote a  solution of \eqref{FGP} and the sequence $\{x^k\}$ be generated by Algorithm~\ref{GPG}. Then, we have 
    	\begin{equation}\label{eq:GPGMrate}
    	F(x^{k+1}_{ag})- F(x^*)
    	\leq \frac{2\gamma}{(2-\alpha)(k+1)^2}\left(\norm{\hat{x}^1-x^*}^2-\norm{\hat{x}^{k+1}-x^*}^2\right),
    	\end{equation}
    	where $\alpha=\frac{L}{\gamma}\in (0,1]$.
    \end{theorem}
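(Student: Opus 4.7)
My plan is to follow the standard estimating-sequence analysis of accelerated proximal gradient methods, but with one crucial twist that exploits the Güler-type update $\hat{x}^{k+1}=(\alpha-1)\hat{x}^{k}+(2-\alpha)x^{k+1}$. First I would bound $f(x^{k+1}_{ag})$ by combining the descent lemma \eqref{PG_L} (applied at $x^k_{md}$ with step $x^{k+1}_{ag}-x^k_{md}=\theta_k(x^{k+1}-\hat{x}^k)$) with two convex lower bounds
$f(x^k_{md})\le f(x^k_{ag})-\langle\nabla f(x^k_{md}),x^k_{ag}-x^k_{md}\rangle$ and
$f(x^k_{md})\le f(x^*)-\langle\nabla f(x^k_{md}),x^*-x^k_{md}\rangle$,
weighted by $(1-\theta_k)$ and $\theta_k$. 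Using the definition \eqref{PGsxd} the inner products collapse to $\theta_k\langle\nabla f(x^k_{md}),\hat{x}^k-x^*\rangle$, yielding
$f(x^{k+1}_{ag})\le (1-\theta_k)f(x^k_{ag})+\theta_k f(x^*)+\theta_k\langle\nabla f(x^k_{md}),x^{k+1}-x^*\rangle+\tfrac{L\theta_k^2}{2}\|x^{k+1}-\hat{x}^k\|^2$.

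Next I would process the $g$-part via the first-order optimality of \eqref{PGxk}: for any $x$, $g(x^{k+1})-g(x)\le\langle\nabla f(x^k_{md})+\tau_k(x^{k+1}-\hat{x}^k),x-x^{k+1}\rangle$, and apply this at $x=x^*$. Using Lemma \ref{fact}(iv) with $a=x^{k+1},b=\hat{x}^k,c=x^*,d=x^{k+1}$ converts $2\langle x^{k+1}-\hat{x}^k,x^*-x^{k+1}\rangle$ into $\|\hat{x}^k-x^*\|^2-\|x^{k+1}-x^*\|^2-\|\hat{x}^k-x^{k+1}\|^2$. Combining with convexity of $g$ applied to $x^{k+1}_{ag}=(1-\theta_k)x^k_{ag}+\theta_k x^{k+1}$, and using the relations $\theta_k\tau_k=\gamma\theta_k^2$ and $L\theta_k^2=\alpha\gamma\theta_k^2=\alpha\theta_k\tau_k$, I should arrive at the clean one-step estimate
$F(x^{k+1}_{ag})-(1-\theta_k)F(x^k_{ag})-\theta_k F(x^*)\le \tfrac{\theta_k\tau_k}{2}\bigl(\|\hat{x}^k-x^*\|^2-\|x^{k+1}-x^*\|^2-(1-\alpha)\|\hat{x}^k-x^{k+1}\|^2\bigr)$.

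The hard part, and the whole point of using \eqref{PGhatx}, is to absorb the leftover negative term $-(1-\alpha)\|\hat{x}^k-x^{k+1}\|^2$ rather than discard it as Nesterov's proof would. I would expand $\hat{x}^{k+1}-x^*=(\alpha-1)(\hat{x}^k-x^*)+(2-\alpha)(x^{k+1}-x^*)$ and, since the coefficients sum to one, use the identity $\|cu+dv\|^2=c\|u\|^2+d\|v\|^2-cd\|u-v\|^2$ (a direct consequence of Lemma \ref{fact}) to obtain
$\|x^{k+1}-x^*\|^2=\tfrac{1}{2-\alpha}\|\hat{x}^{k+1}-x^*\|^2+\tfrac{1-\alpha}{2-\alpha}\|\hat{x}^k-x^*\|^2-(1-\alpha)\|\hat{x}^k-x^{k+1}\|^2$.
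Substituting, the $-(1-\alpha)\|\hat{x}^k-x^{k+1}\|^2$ terms cancel exactly, and the right-hand side collapses to $\tfrac{\theta_k\tau_k}{2(2-\alpha)}\bigl(\|\hat{x}^k-x^*\|^2-\|\hat{x}^{k+1}-x^*\|^2\bigr)$; this is precisely the quantitative payoff of the Güler-type extrapolation.

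Finally I would divide by $\theta_k^2=1/t_k^2$ and note that $(1-\theta_k)/\theta_k^2=t_k^2-t_k=t_{k-1}^2$ by Lemma \ref{factt}, giving the telescoping form $t_k^2(F(x^{k+1}_{ag})-F(x^*))-t_{k-1}^2(F(x^k_{ag})-F(x^*))\le \tfrac{\gamma}{2(2-\alpha)}\bigl(\|\hat{x}^k-x^*\|^2-\|\hat{x}^{k+1}-x^*\|^2\bigr)$. Summing from $1$ to $k$ (with $t_0=0$) and using $t_k\ge (k+1)/2$ from Lemma \ref{factt} yields \eqref{eq:GPGMrate}. I anticipate the only delicate bookkeeping is the sign management in the coefficient identity above (since $\alpha-1<0$ when $\alpha<1$, so the decomposition is not a true convex combination), but the algebraic identity still applies and produces the factor $1/(2-\alpha)$ that appears in the theorem.
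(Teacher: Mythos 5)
Your proposal is correct and follows essentially the same route as the paper: the same weighted descent-lemma/convexity bound on $f$, the same use of the prox optimality condition with the three-point identity, the same telescoping via $t_{k-1}^2=t_k(t_k-1)$, and the same absorption of the $-(1-\alpha)\|\hat{x}^k-x^{k+1}\|^2$ term into $\|\hat{x}^{k+1}-x^*\|^2$ (you phrase this via the affine-combination identity $\|cu+dv\|^2=c\|u\|^2+d\|v\|^2-cd\|u-v\|^2$ for $c+d=1$, whereas the paper invokes Lemma~\ref{fact}(i), but these are the same algebra). Your closing worry about $\alpha-1<0$ is already resolved correctly, since that identity holds for any real coefficients summing to one.
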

    \begin{proof}
    	Form $\eqref{PGsxd}$ and $\eqref{PGxag}$, $x^{k+1}_{ag}-x^k_{md}=\theta_k(x^{k+1}-\hat{x}^k)$. Using this observation and the smoothness and convexity of $f(\cdot)$ (i.e., \eqref{PG_L}), we have 	
    	\begin{small}
    			\begin{equation}\label{GPGeq0}
    		\begin{aligned}
    		f(x^{k+1}_{ag})- f(x)&\leq f(x^k_{md})+\inp{x^{k+1}_{ag}-x^k_{md},\nabla f(x^k_{md})}+\frac{L}{2}\norm{x^{k+1}_{ag}-x^k_{md}}^2-f(x)\\
    		&=f(x^k_{md})+\inp{x^{k+1}_{ag}-x^k_{md},\nabla f(x^k_{md})}+\frac{L\theta_k^2}{2}\norm{x^{k+1}-\hat{x}^k}^2-f(x)\\
    		&= (1-\theta_k)(f(x^k_{md})+\inp{x^{k}_{ag}-x^k_{md},\nabla f(x^k_{md})}-f(x))\\
    		&\quad+\theta_k (f(x^k_{md})-f(x)+\inp{x^{k+1}-x^k_{md},\nabla f(x^k_{md})})+\frac{L\theta_k^2}{2}\norm{\hat{x}^k-x^{k+1}}^2\\
    		&\leq (1-\theta_k)(f(x^k_{ag})-f(x))+\theta_k (\inp{\nabla f(x^k_{md}),x^{k+1}-x})+\frac{L\theta_k^2}{2}\norm{\hat{x}^k-x^{k+1}}^2,
    		\end{aligned}
    		\end{equation}
    	\end{small}where the second equality follows from \eqref{PGxag} and the last inequality comes from the convexity of $f(\cdot)$. 
    	Moreover, from the convexity of $g(x)$ and \eqref{PGxag}, we have 
    	\begin{small}
    			\begin{equation*}
    		g(x^{k+1}_{ag})-g(x)\leq (1-\theta_k)(g(x^k_{ag})-g(x))+\theta_k (g(x^{k+1})-g(x)).
    		\end{equation*}
    	\end{small}Combining the above two inequality, we obtain
    	\begin{small}
    			\begin{equation}\label{GPGeq1}
    		\begin{aligned}
    		&\quad\;f(x^{k+1}_{ag})+g(x^{k+1}_{ag})- (f(x)+g(x))\\
    		&\leq (1-\theta_k)(f(x^k_{ag})+g(x^k_{ag})-f(x)-g(x))\\
    		&\quad+\theta_k (g(x^{k+1})-g(x)+\inp{\nabla f(x^k_{md}),x^{k+1}-x})+\frac{L\theta_k^2}{2}\norm{\hat{x}^k-x^{k+1}}^2.
    		\end{aligned}
    		\end{equation}
    	\end{small}By the first order optimality condition of \eqref{PGxk}, we have for any $x\in X$
    	\begin{equation*}
    	\begin{aligned}
    	&\quad \;g(x^{k+1})-g(x)+\inp{\nabla f(x^k_{md}),x^{k+1}-x}\\
    	&\leq \tau_k\inp{x^{k+1}-\hat{x}^k,x-x^{k+1}}\\
    	&=\frac{\tau_k}{2}\left(\norm{\hat{x}^k-x}^2-\norm{x^{k+1}-x}^2-\norm{\hat{x}^k-x^{k+1}}^2\right),
    	\end{aligned}
    	\end{equation*}
    	where the equality is from Lemma \ref{fact} (iii). 
    	Using the above inequality into \eqref{GPGeq1} and taking $x=x^*$, we have 
    	\begin{small}
    			\begin{equation*}
    		\begin{aligned}
    		&\quad\;f(x^{k+1}_{ag})+g(x^{k+1}_{ag})- f(x^*)-g(x^*)\\
    		&\leq (1-\theta_k)(f(x^k_{ag})-f(x^*)+g(x^k_{ag})-g(x^*))\\
    		&\quad+\frac{\tau_k\theta_k}{2}\norm{\hat{x}^k-x^*}^2
    		-\frac{\tau_k\theta_k}{2}\norm{x^{k+1}-x^*}^2-\left(\frac{\tau_k\theta_k}{2}-\frac{L\theta_k^2}{2}\right)\norm{\hat{x}^k-x^{k+1}}^2. 
    		\end{aligned}
    		\end{equation*}
    	\end{small}Recalling that $t_k=\cfrac{1}{\theta_k}$ and $\gamma=\cfrac{\tau_k}{\theta_k}$ in \eqref{PGpara}, we divide both sides of the above inequality by $\theta_k^2$ and obtain
    	\begin{equation}\label{GPGeq2}
    	\begin{aligned}
    	&\quad\;t^2_k(f(x^{k+1}_{ag})+g(x^{k+1}_{ag})- f(x^*)-g(x^*))\\
    	&\leq t_k(t_k-1)(f(x^k_{ag})+g(x^k_{ag})-f(x^*)-g(x^*))\\
    	&\quad+\frac{\gamma}{2}\left(\norm{\hat{x}^k-x^*}^2-\norm{x^{k+1}-x^*}^2\bm{-\left(1-\alpha\right)\norm{\hat{x}^k-x^{k+1}}^2}\right),
    	\end{aligned}
    	\end{equation}
    	where $\alpha=\cfrac{L}{\gamma}$. Moreover, we can rewrite \eqref{GPGeq2} in the following form: 
    	\begin{small}
    			\begin{equation*}
    		\begin{aligned}
    		&\quad\; t^2_k (F(x^{k+1}_{ag})-F(x^*))- t^2_{k-1}(F(x^{k}_{ag})-F(x^*))\\
    		&= t^2_k (F(x^{k+1}_{ag})-F(x^*))- t_k(t_k-1) (F(x^{k}_{ag})-F(x^*))\\
    		&\leq \frac{\gamma}{2(2-\alpha)}\left((2-\alpha)(\norm{\hat{x}^k-x^*}^2-\norm{x^{k+1}-x^*}^2-\left(1-\alpha\right)\norm{\hat{x}^k-x^{k+1}}^2)\right)\\
    		&=\frac{\gamma}{2(2-\alpha)}\left(\norm{\hat{x}^k-x^*}^2-\norm{x^{k+1}-x^*-(1-\alpha)(\hat{x}^k-x^{k+1})}^2\right)\\
    		&=\frac{\gamma}{2(2-\alpha)}\left(\norm{\hat{x}^k-x^*}^2-\norm{(\alpha-1)\hat{x}^k+(2-\alpha)x^{k+1}-x^*}^2\right)\\
    		&=\frac{\gamma}{2(2-\alpha)}\left(\norm{\hat{x}^k-x^*}^2-\norm{\hat{x}^{k+1}-x^*}^2\right),
    		\end{aligned}
    		\end{equation*}
    	\end{small}where the first equality follows from Lemma \ref{factt}, the second euqality is from Lemma \ref{fact} (ii) and the last equality is from the iteration of \eqref{PGhatx}. Applying the above inequality inductively with $t_0=0$ and $\hat{x}^1=x^1$, we have 
    	\begin{small}
    		\begin{equation}\label{bound1}
    		\begin{aligned}
    		t^2_k (F(x^{k+1}_{ag})-F(x^*))\leq \frac{\gamma}{2(2-\alpha)}\left(\norm{\hat{x}^1-x^*}^2-\norm{\hat{x}^{k+1}-x^*}^2\right),
    		\end{aligned}
    		\end{equation}
    	\end{small}which implies, according to  Lemma \ref{factt}, that
    	\begin{small}
    		\begin{equation*}
    	F(x^{k+1}_{ag})- F(x^*)
    	\leq \frac{2\gamma}{(2-\alpha)(k+1)^2}\left(\norm{\hat{x}^1-x^*}^2-\norm{\hat{x}^{k+1}-x^*}^2\right).
    	\end{equation*}
    	\end{small}This completes the proof.
    \end{proof}

\begin{remark}\label{rem:2}
	Observing the formulation \eqref{GPGeq2} in the proof of Theorem \ref{ThGPG}, we can see that when $\alpha=1$, the proof of GPGM is consistent with that of Nesterov's second APGM. The key difference between   Nesterov's second APGM and GPGM lies in the utilization of the term \bm{$-(1-\alpha)\| \hat{x}^k - x^{k+1}\|^2$} in  \eqref{GPGeq2} to design the extrapolation \eqref{PGhatx}, where we set $\gamma > L$. In contrast, Nesterov's second APGM directly uses $\tau_k = L\theta_k$, which shows $\alpha=1$, thereby ignoring the negative term $-\| \hat{x}^k - x^{k+1} \|^2$. This parameter setting allows GPGM to achieve the same convergence rate as Nesterov's second APGM but with a more general result corresponding to \eqref{eq:GPGMrate},
	as stated in Theorem \ref{ThGPG} (see Table \ref{Table 0}).
\end{remark}

\begin{remark}\label{rem:21}
	In the following, we test $\ell_1$ regularized logistic regression \eqref{lg} and provide some numerical evidence to show that the upper bound \eqref{bound1} from GPGM with $\alpha<1$
	\[
	t^2_k (F(x^{k+1}_{ag})-F(x^*))\leq \frac{\gamma}{2(2-\alpha)}\left(\|\hat{x}^1-x^*\|^2-\|\hat{x}^{k+1}-x^*\|^2\right)
	\]
	may be tighter than the upper bound from Nesterov's second APGM ($\alpha=1$)
	\[
	t^2_k (F(x^{k+1}_{ag})-F(x^*))\leq \frac{L}{2}\left(\|\hat{x}^1-x^*\|^2-\|x^{k+1}-x^*\|^2\right),
	\]
	that is 
	\[
	\frac{\gamma}{2(2-\alpha)}\left(\|\hat{x}^1-x^*\|^2-\|\hat{x}^{k+1}-x^*\|^2\right)\leq \frac{L}{2}\left(\|\hat{x}^1-x^*\|^2-\|x^{k+1}-x^*\|^2\right).
	\]
	
	We consider $\ell_1$ regularized logistic regression problem \eqref{lg} with different parameters $(m, n, s) = (3000, 300, 30)$, $(5000, 500, 50)$, and $(8000, 800, 80)$ and $\alpha=0.2, 0.4, 0.6, 0.8, 1$. The numerical results are presented in Figure \ref{zbFig1}  (a)-(c).
	We can see from Figure \ref{zbFig1} (a)-(c) that, in different $(m,n,s)$, the upper bounds from GPGM with $\alpha<1$ are tighter than those from Nesterov's second APGM ($\alpha$=1). This suggests that GPGM may lead to a larger decrease.
	\begin{figure}[htbp]
		\centering
		\subfigure[]{
			\includegraphics[scale=0.22]{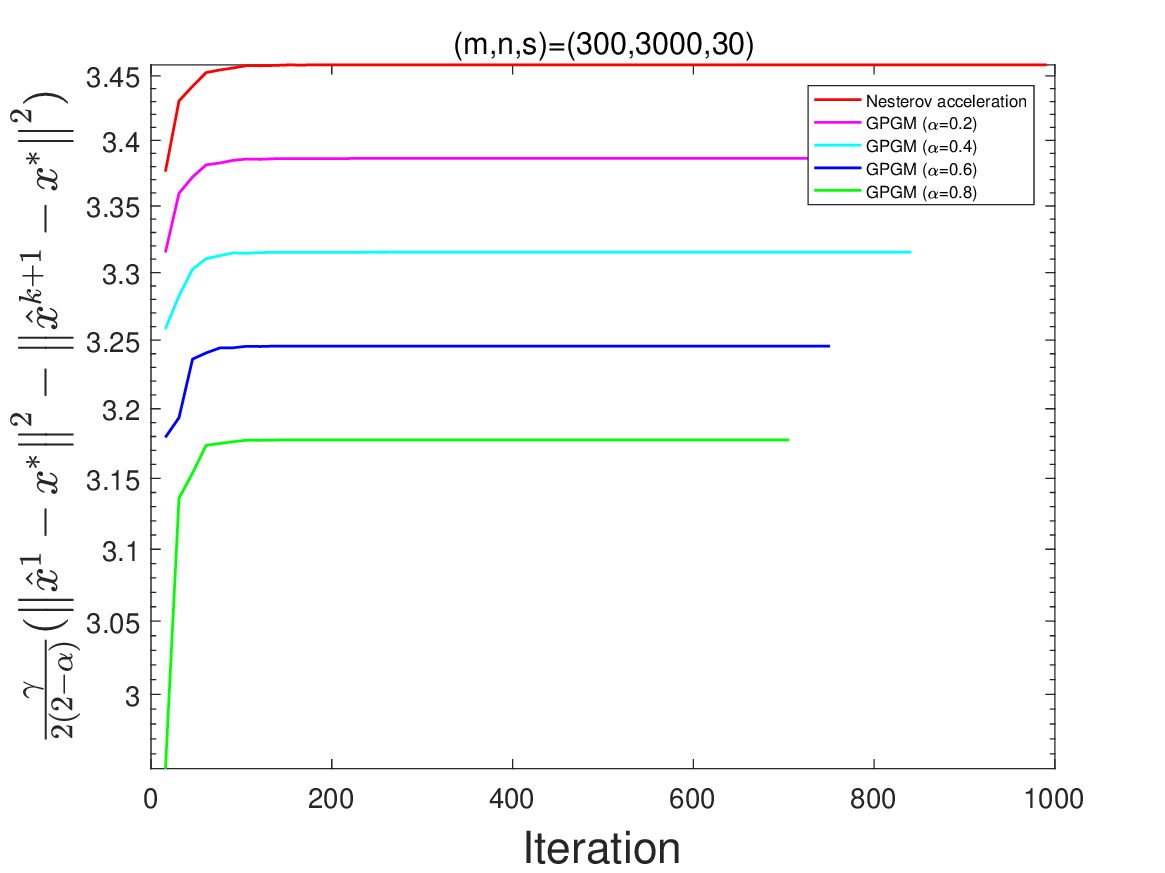}
		}
		\subfigure[]{
			\includegraphics[scale=0.22]{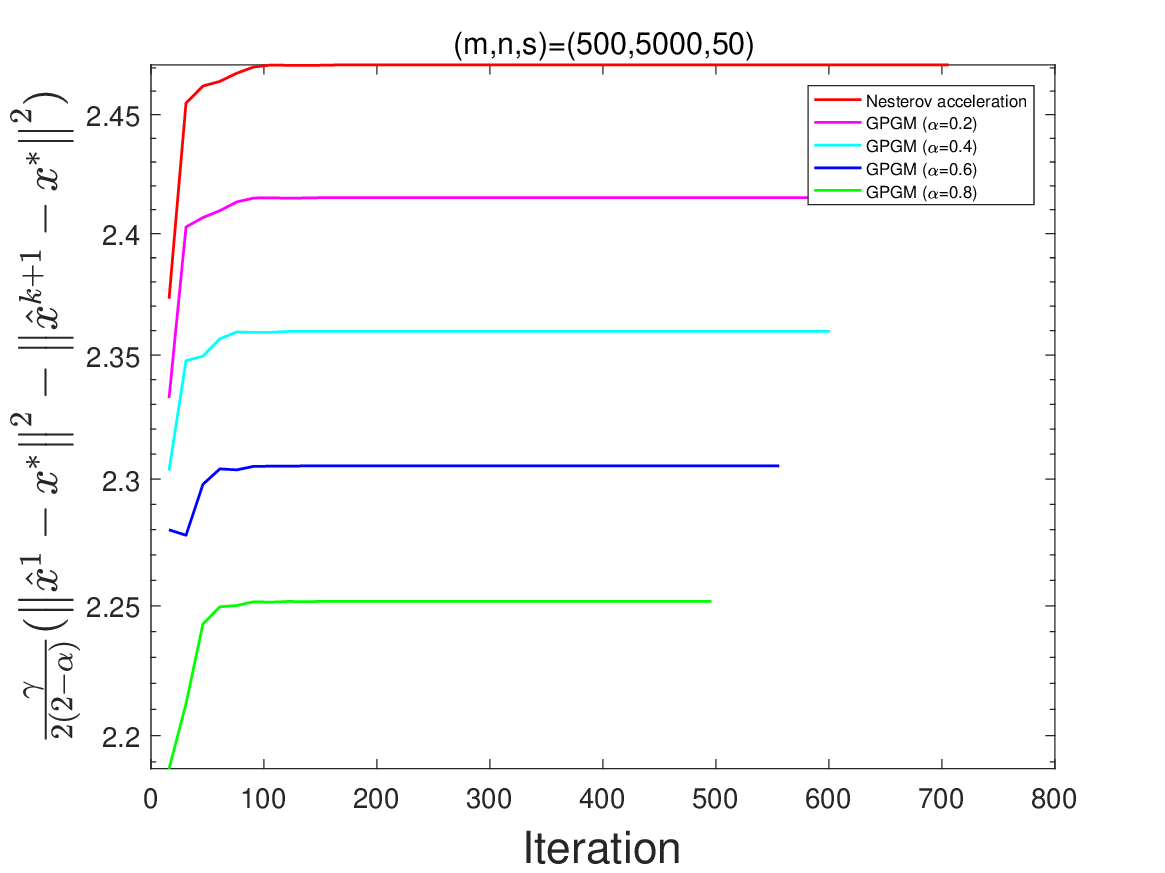}
		}
		\subfigure[]{
			\includegraphics[scale=0.22]{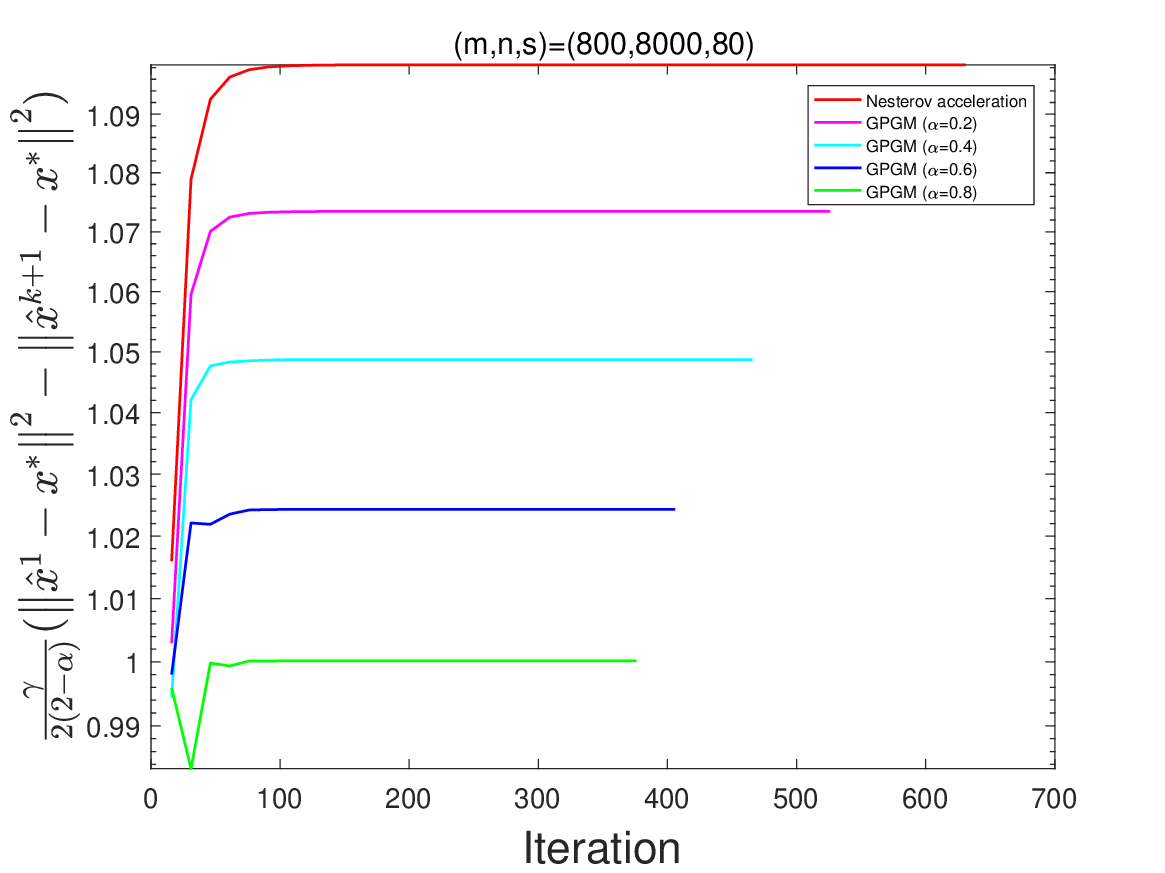}
		}
		\caption{The upper bounds of GPGM and Nesterov's second APGM}
		\label{zbFig1}
	\end{figure}
\end{remark}


\section{The G\"uler-type accelerated LALM}
From Section 2, we have learned that the G\"uler-type acceleration technique is applicable to gradient-based algorithms. In the case of linearly constrained composite convex programming problem \eqref{LCP}, we apply G\"uler-type acceleration to LALM. The proposed GLALM method is presented in Algorithm \ref{GALALM}.

\begin{algorithm}[h]  
	\caption{G\"uler-type accelerated linearized augmented Lagrangian method (GLALM)}  
	\label{GALALM}  
	\begin{algorithmic}[1]  
		\Require  initial point $x^1_{ag}=x^1=\hat{x}^1$ and set $z^1=\hat{z}^1=0$, $\alpha=\frac{2L}{\eta}$ 
		\State $k \leftarrow 1$  
		\Repeat  
		\State Generate
		\begin{align}
		&x^k_{md}=\left(1-\theta_k\right) x^k_{ag}+\theta_k \hat{x}^k\label{GALxmd}\\
		&x^{k+1}= \underset{x\in X}{\arg \min }\left\langle\nabla f(x^k_{md})-A^{\top} \hat{z}^k, x\right\rangle+g(x)+\frac{\kappa_k}{2}\|A x-b\|^2+\frac{1}{2}\left\|x-\hat{x}^k\right\|_{P^k}^2\label{GALx}\\
		&\bm{\hat{x}^{k+1}=(\alpha-1)\hat{x}^{k}+(2-\alpha)x^{k+1}} \label{GALhatx}\\
		&x^{k+1}_{ag}=\left(1-\theta_k\right) x^k_{ag}+\theta_k x^{k+1}\label{GALxag}\\
		&\bm{z^{k+1}=\hat{z}^k-\gamma_k(A x^{k+1}-b)}\label{GALz}\\
		&\hat{z}^{k+1}=(1-\kappa)\hat{z}^{k}+\kappa z^{k+1}\label{GALhatz}
		\end{align}
		\State $k \leftarrow k+1$
		\Until $\left(x^{k+1}_{ag}, z^{k+1}\right)$ satisfies the stop criterion. 
		\Ensure  $\left(x^{k+1}_{ag}, z^{k+1}\right)$  
	\end{algorithmic}  
\end{algorithm}
Note that if, in Algorithm \ref{GALALM}, $\alpha=1$ and $\kappa=1$, then $\hat{x}^k=x^k$ and $\hat{z}^k=z^k$. The Algorithm~\ref{GALALM}  becomes the  ALALM in \cite[Algorithm 1]{xu2017accelerated}. However, when $\alpha\in (0,1)$ and $\kappa\in (1,2)$, our GALM algorithm can accelerate both the primal variable $x^k$ and the dual variable $z^k$. In contrast, existing accelerated ALM algorithms, as described in \cite{xu2017accelerated,ke2017accelerated,he2010acceleration}, 
can only accelerate either the primal or dual variable. 

To analyze the convergence rate of Algorithm \ref{GALALM}, we first need the following lemma to establish the relationship between two iterations of Algorithm \ref{GALALM}.
\begin{lemma}\label{l:GLALM}
	Let $\left\{\left(x^k,\hat{x}^k, x^k_{ag}, \hat{z}^k,z^k\right)\right\}$ be the sequence generated by Algorithm \ref{GALALM}. Then for any $(x, z)$ such that $A x=b$, we have
	\begin{small}
			\begin{equation}\label{GALiteration}
		\begin{aligned}
		&\quad\; \left(F\left(x^{k+1}_{ag}\right)-F(x)-\inp{z, A x^{k+1}_{ag}-b}\right)-(1-\theta_k)\left(F\left(x^k_{ag}\right)-F(x)-\inp{z, A x^k_{ag}-b}\right) \\
		&\leq\frac{\theta_k}{2}\left(\norm{\hat{x}^k-x}^2_{P^k}-\norm{x^{k+1}-x}^2_{P^k}-\norm{\hat{x}^k-x^{k+1}}^2_{P^k}\right)+\frac{L\theta_k^2}{2}\left\|\hat{x}^k-x^{k+1}\right\|^2\\
		&\quad +\frac{\theta_k}{2\gamma_{k}}\left(\norm{\hat{z}^k-z}^2-\norm{z^{k+1}-z}^2+\norm{\hat{z}^k-z^{k+1}}^2\right)
		-\frac{\theta_k\kappa_{k}}{\gamma^2_{k}}\norm{\hat{z}^k-z^{k+1}}^2. 
		\end{aligned}
		\end{equation}
	\end{small}
\end{lemma}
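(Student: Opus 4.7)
The plan is to mirror the proof structure of Theorem \ref{ThGPG} but now carry the augmented-Lagrangian/constraint terms through the computation. I would begin with the smoothness inequality \eqref{PG_L} applied at $x^k_{md}$, using the key identity $x^{k+1}_{ag}-x^k_{md}=\theta_k(x^{k+1}-\hat{x}^k)$ from \eqref{GALxmd}--\eqref{GALxag}, and then split the resulting inner product into a $(1-\theta_k)$-part (bounded by convexity of $f$ with $x^k_{ag}$) and a $\theta_k$-part (kept as $\langle\nabla f(x^k_{md}),x^{k+1}-x\rangle$). Adding the analogous convex-combination bound for $g$ yields
\begin{equation*}
F(x^{k+1}_{ag})-F(x)\le(1-\theta_k)(F(x^k_{ag})-F(x))+\theta_k\bigl(g(x^{k+1})-g(x)+\langle\nabla f(x^k_{md}),x^{k+1}-x\rangle\bigr)+\tfrac{L\theta_k^2}{2}\|\hat{x}^k-x^{k+1}\|^2.
\end{equation*}

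Next I would apply the first-order optimality condition of the $x$-subproblem \eqref{GALx}: for all $x\in X$,
\begin{equation*}
g(x^{k+1})-g(x)+\langle\nabla f(x^k_{md})-A^\top\hat{z}^k,x^{k+1}-x\rangle+\kappa_k\langle Ax^{k+1}-b,A(x^{k+1}-x)\rangle+\langle P^k(x^{k+1}-\hat{x}^k),x^{k+1}-x\rangle\le 0.
\end{equation*}
Using $Ax=b$ turns $A(x^{k+1}-x)$ into $Ax^{k+1}-b$, producing a $-\kappa_k\|Ax^{k+1}-b\|^2$ term, and the inner product $\langle A^\top\hat{z}^k,x^{k+1}-x\rangle=\langle\hat{z}^k,Ax^{k+1}-b\rangle$. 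For the $P^k$-weighted term I would apply Lemma~\ref{fact}(iv) (the four-point identity) with $a=x^{k+1}$, $b=\hat{x}^k$, $c=x$, $d=x^{k+1}$, giving exactly $\tfrac12\bigl(\|\hat{x}^k-x\|_{P^k}^2-\|x^{k+1}-x\|_{P^k}^2-\|\hat{x}^k-x^{k+1}\|_{P^k}^2\bigr)$, which is the primal bracket on the right-hand side of \eqref{GALiteration} once multiplied by $\theta_k$.

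To produce the $-\langle z,Ax^{k+1}_{ag}-b\rangle$ structure on the left I would use the convex-combination identity $Ax^{k+1}_{ag}-b=(1-\theta_k)(Ax^k_{ag}-b)+\theta_k(Ax^{k+1}-b)$ that follows from \eqref{GALxag}. Subtracting $\langle z,Ax^{k+1}_{ag}-b\rangle-(1-\theta_k)\langle z,Ax^k_{ag}-b\rangle=\theta_k\langle z,Ax^{k+1}-b\rangle$ on both sides converts the $\theta_k\langle\hat{z}^k,Ax^{k+1}-b\rangle$ term into $\theta_k\langle\hat{z}^k-z,Ax^{k+1}-b\rangle$. The dual update \eqref{GALz} gives $Ax^{k+1}-b=\tfrac{1}{\gamma_k}(\hat{z}^k-z^{k+1})$, so the three-point identity $2\langle\hat{z}^k-z,\hat{z}^k-z^{k+1}\rangle=\|\hat{z}^k-z\|^2-\|z^{k+1}-z\|^2+\|\hat{z}^k-z^{k+1}\|^2$ yields the dual bracket $\tfrac{\theta_k}{2\gamma_k}(\cdots)$, and the $-\theta_k\kappa_k\|Ax^{k+1}-b\|^2$ term becomes $-\tfrac{\theta_k\kappa_k}{\gamma_k^2}\|\hat{z}^k-z^{k+1}\|^2$.

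The only nontrivial bookkeeping is keeping track of where the $\theta_k$ factors live and making sure the convex-combination reorganization $Ax^{k+1}_{ag}-b=(1-\theta_k)(Ax^k_{ag}-b)+\theta_k(Ax^{k+1}-b)$ is synchronized with the $(1-\theta_k)/\theta_k$ split of $F$; once this is set up the proof reduces to applying identity (iv) of Lemma~\ref{fact} twice (once in the $P^k$-metric for the primal, once in the Euclidean metric for the dual). The main obstacle I anticipate is correctly routing the $\langle\hat{z}^k,Ax^{k+1}-b\rangle$ term so that the \emph{extrapolated} dual $\hat{z}^k$, rather than $z^k$, appears in the final bracket: this is exactly the place where the G\"uler-type dual extrapolation manifests, and it is what makes the negative term $-\tfrac{\theta_k\kappa_k}{\gamma_k^2}\|\hat{z}^k-z^{k+1}\|^2$ available for later use in designing \eqref{GALhatz}, analogous to how $-\|\hat{x}^k-x^{k+1}\|^2$ was exploited in \eqref{PGhatx}.
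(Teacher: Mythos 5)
Your proposal is correct and follows essentially the same route as the paper: the same smoothness/convexity split yielding \eqref{GALeq1}--\eqref{GALeq2}, the same use of the optimality condition of \eqref{GALx} with $Ax=b$ and the dual update \eqref{GALz}, and the same two applications of the polarization identity (Lemma \ref{fact}(iv)) for the $P^k$-metric primal bracket and the Euclidean dual bracket. The only cosmetic difference is that the paper substitutes $Ax^{k+1}-b=\frac{1}{\gamma_k}(\hat{z}^k-z^{k+1})$ into the optimality condition before rearranging, whereas you carry $Ax^{k+1}-b$ through and substitute at the end; the resulting bounds are identical.
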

\begin{proof}
	Using \eqref{LCP}, \eqref{GALxmd} and \eqref{GALxag}, we have the same inequality as \eqref{GPGeq0} in the following
	\begin{small}
		\begin{equation}\label{GALeq1}
		\begin{aligned}
		f(x^{k+1}_{ag})- f(x)
		\leq (1-\theta_k)(f(x^k_{ag})-f(x))+\theta_k (\inp{\nabla f(x^k_{md}),x^{k+1}-x})+\frac{L\theta_k^2}{2}\norm{\hat{x}^k-x^{k+1}}^2.
		\end{aligned}
		\end{equation}
	\end{small}By some simple calculations and using the relation of \eqref{GALxag}, we obtain
	\begin{small}
		\begin{equation}\label{GALeq2}
		\begin{aligned}
		&\quad\;\left(F(x^{k+1}_{ag})-F(x)-\inp{z, A x^{k+1}_{ag}-b}\right)-\left(1-\theta_k\right)\left(F(x^k_{ag})-F(x)-\inp{z, A x^k_{ag}-b}\right)\\
		&=\left[f(x^{k+1}_{ag})-\left(1-\theta_k\right) f(x^k_{ag})-\theta_k f(x)\right]+\left[g(x^{k+1}_{ag})-\left(1-\theta_k\right) g(x^k_{ag})-\theta_k g(x)\right] \\
		& \quad-\theta_k\inp{z, A x^{k+1}-b}\\
		&\leq \theta_k\inp{x^{k+1}-x,\nabla f(x^k_{md})}+\frac{L\theta_k^2}{2}\norm{\hat{x}^k-x^{k+1}}^2+\theta_k(g(x^{k+1})-g(x))-\theta_k\inp{z, A x^{k+1}-b}, 
		\end{aligned}
		\end{equation}
	\end{small}where the last inequality comes from \eqref{GALeq1} and the convexity of $g(x)$. 
	By the first order optimality condition of \eqref{GALx}, $Ax=b$ and \eqref{GALz}, we have
	\begin{small}
		\begin{equation*}
		\begin{aligned}
		0&\geq \inp{x^{k+1}-x, \nabla f(x^k_{md})-A^{\top} \hat{z}^k+\kappa_k A^{\top}(A x^{k+1}-b)+P^k\left(x^{k+1}-\hat{x}^k\right)}+g\left(x^{k+1}\right)-g(x)\\
		&=\inp{x^{k+1}-x, \nabla f(x^k_{md})-A^{\top} \hat{z}^k+\frac{\kappa_k}{\gamma_k} A^{\top}\left(\hat{z}^k-z^{k+1}\right)+P^k\left(x^{k+1}-\hat{x}^k\right)}+g(x^{k+1})-g(x) \\
		&= \inp{ x^{k+1}-x, \nabla f(x^k_{md})}+g(x^{k+1})-g(x)+\inp{x^{k+1}-x, P^k\left(x^{k+1}-\hat{x}^k\right)} \\
		& \quad+\inp{A(x^{k+1}-x),-\hat{z}^k+\frac{\kappa_k}{\gamma_k}\left(\hat{z}^k-z^{k+1}\right)}\\
		&= \left\langle x^{k+1}-x, \nabla f(x^k_{md})\right\rangle+g\left(x^{k+1}\right)-g(x)+\left\langle x^{k+1}-x, P^k\left(x^{k+1}-\hat{x}^k\right)\right\rangle \\
		&\quad +\left\langle A x^{k+1}-b, z-\hat{z}^k+\frac{\kappa_k}{\gamma_k}\left(\hat{z}^k-z^{k+1}\right)\right\rangle-\left\langle z, A x^{k+1}-b\right\rangle \\
		&=\left\langle x^{k+1}-x, \nabla f(x^k_{md})\right\rangle+g\left(x^{k+1}\right)-g(x)-\left\langle z, A x^{k+1}-b\right\rangle+\left\langle x^{k+1}-x, P^k\left(x^{k+1}-\hat{x}^k\right)\right\rangle \\
		&\quad +\left\langle\frac{1}{\gamma_k}\left(\hat{z}^k-z^{k+1}\right), z-\hat{z}^k+\frac{\kappa_k}{\gamma_k}\left(\hat{z}^k-z^{k+1}\right)\right\rangle\\
		&=\left\langle x^{k+1}-x, \nabla f(x^k_{md})\right\rangle+g\left(x^{k+1}\right)-g(x)-\left\langle z, A x^{k+1}-b\right\rangle+\left\langle x^{k+1}-x, P^k\left(x^{k+1}-\hat{x}^k\right)\right\rangle \\
		&\quad +\frac{1}{\gamma_k}\left\langle \hat{z}^k-z^{k+1}, z-\hat{z}^k\right\rangle
		+\frac{\kappa_{k}}{\gamma^2_{k}}\norm{\hat{z}^k-z^{k+1}}^2. 
		\end{aligned}
		\end{equation*}
	\end{small}Combining the above inequality into \eqref{GALeq2}, we obtain
	\begin{small}
		\begin{equation*}
		\begin{aligned}
		&\quad\; \left(F\left(x^{k+1}_{ag}\right)-F(x)-\inp{z, A x^{k+1}_{ag}-b}\right)-(1-\theta_k)\left(F\left(x^k_{ag}\right)-F(x)-\inp{z, A x^k_{ag}-b}\right) \\
		&  \leq \frac{\theta_k^2 L}{2}\left\|x^{k+1}-\hat{x}^k\right\|^2-\theta_k\left\langle x^{k+1}-x, P^k\left(x^{k+1}-\hat{x}^k\right)\right\rangle \\
		& \quad-\frac{\theta_k}{\gamma_k}\left\langle \hat{z}^k-z^{k+1}, z-\hat{z}^k\right\rangle
		-\frac{\theta_k\kappa_{k}}{\gamma^2_{k}}\norm{\hat{z}^k-z^{k+1}}^2\\
		&=\frac{\theta_k}{2}\left(\norm{\hat{x}^k-x}^2_{P^k}-\norm{x^{k+1}-x}^2_{P^k}-\norm{\hat{x}^k-x^{k+1}}^2_{P^k}\right)+\frac{L\theta_k^2}{2}\left\|\hat{x}^k-x^{k+1}\right\|^2\\
		&\quad +\frac{\theta_k}{2\gamma_{k}}\left(\norm{\hat{z}^k-z}^2-\norm{z^{k+1}-z}^2+\norm{\hat{z}^k-z^{k+1}}^2\right)
		-\frac{\theta_k\kappa_{k}}{\gamma^2_{k}}\norm{\hat{z}^k-z^{k+1}}^2, 
		\end{aligned}
		\end{equation*}
	\end{small}where the equality follows from Lemma \ref{fact}(iv).
\end{proof}

Now we analysis the convergence rate of GLALM. 
\begin{theorem}\label{ThGAL}
	Let $\left(x^*, z^*\right)$ be a KKT point of \eqref{LCP} and the sequence $\left\{\left(x^k,\hat{x}^k, x^k_{ag}, \hat{z}^k,z^k\right)\right\}$ be generated by Algorithm \ref{GALALM} with parameters set to 
\begin{small}
		\begin{equation}\label{GALpara}
	\theta_k=\frac{2}{k+1},\quad \gamma_k=k\gamma,\quad \kappa_k=\frac{\kappa \gamma k}{2},\quad P^k=\frac{\eta}{k}I,
	\end{equation}
\end{small}
	where $\eta>0$. Then, 
	\begin{small}
		\begin{equation}\label{GALeq4}
		\begin{aligned}
		&\left|F(x^{k+1}_{ag})-F(x^*)\right| \leq \frac{1}{k(k+1)}\left(\frac{\eta}{2-\alpha}\left\|x^1-x^*\right\|^2+\frac{\max \left\{\left(1+\left\|z^*\right\|\right)^2, 4\left\|z^*\right\|^2\right\}}{\gamma\kappa}\right)\\
		&\left\|A x^{k+1}_{ag}-b\right\|\leq \frac{1}{k(k+1)}\left(\frac{\eta}{2-\alpha}\left\|x^1-x^*\right\|^2+\frac{\max \left\{\left(1+\left\|z^*\right\|\right)^2, 4\left\|z^*\right\|^2\right\}}{\gamma\kappa}\right). 
		\end{aligned}
		\end{equation}
	\end{small}
\end{theorem}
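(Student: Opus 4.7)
The plan is to carry out a Nesterov-style weighted telescoping of the per-iteration inequality \eqref{GALiteration} from Lemma \ref{l:GLALM}, use the Güler-type identities from Lemma \ref{fact}(ii) to collapse both the primal and dual quadratic terms into clean telescoping differences, and then convert the resulting primal-dual gap estimate into the objective and feasibility bounds via Lemmas \ref{rate1} and \ref{rate2}.

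First, because $\theta_k = \frac{2}{k+1}$, the weights $w_k := k(k+1)$ satisfy $w_k(1-\theta_k) = w_{k-1}$ with $w_0 = 0$, so multiplying \eqref{GALiteration} by $w_k$ and summing from $k=1$ to $N$ telescopes the left-hand side to $N(N+1)\bigl(F(x^{N+1}_{ag}) - F(x) - \langle z, Ax^{N+1}_{ag}-b\rangle\bigr)$. Substituting the parameters \eqref{GALpara} yields the clean reductions $w_k\cdot\tfrac{\theta_k}{2}\norm{\cdot}_{P^k}^2 = \eta\norm{\cdot}^2$, $w_k\cdot\tfrac{\theta_k}{2\gamma_k} = \tfrac{1}{\gamma}$, $w_k\cdot\tfrac{\theta_k\kappa_k}{\gamma_k^2} = \tfrac{\kappa}{\gamma}$, and $w_k\cdot\tfrac{L\theta_k^2}{2} = \tfrac{\alpha\eta k}{k+1} \leq \alpha\eta$. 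After collecting, the weighted primal part is bounded above by $\eta\bigl(\norm{\hat{x}^k-x}^2 - \norm{x^{k+1}-x}^2 - (1-\alpha)\norm{\hat{x}^k-x^{k+1}}^2\bigr)$, while the weighted dual part equals $\tfrac{1}{\gamma}\bigl(\norm{\hat{z}^k-z}^2 - \norm{z^{k+1}-z}^2 - (\kappa-1)\norm{\hat{z}^k-z^{k+1}}^2\bigr)$.

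Next I would apply Lemma \ref{fact}(ii) with $t=2-\alpha$ on the primal side, exactly as in the proof of Theorem \ref{ThGPG} and using the update \eqref{GALhatx}, to rewrite the primal expression as $\tfrac{\eta}{2-\alpha}(\norm{\hat{x}^k-x}^2 - \norm{\hat{x}^{k+1}-x}^2)$. Symmetrically, Lemma \ref{fact}(ii) with $t=\kappa$ and the dual update \eqref{GALhatz} turn the dual expression into $\tfrac{1}{\gamma\kappa}(\norm{\hat{z}^k-z}^2 - \norm{\hat{z}^{k+1}-z}^2)$. Both collapse under summation, and the initial conditions $\hat{x}^1 = x^1$ and $\hat{z}^1 = 0$ together with setting $x=x^*$ deliver
\[
F(x^{N+1}_{ag}) - F(x^*) - \langle z, Ax^{N+1}_{ag}-b\rangle \;\leq\; \frac{1}{N(N+1)}\left(\frac{\eta}{2-\alpha}\norm{x^1-x^*}^2 + \frac{\norm{z}^2}{\gamma\kappa}\right) \;=:\; \phi(z).
\]
Lemma \ref{rate1} then gives $F(x^{N+1}_{ag})-F(x^*) + \rho\norm{Ax^{N+1}_{ag}-b} \leq \sup_{\norm{z}\leq\rho}\phi(z)$, and I invoke Lemma \ref{rate2} twice: with $\rho = 1+\norm{z^*}$ (which yields both the upper estimate $F-F^*\leq \phi(1+\norm{z^*})$ and the feasibility bound $\norm{Ax^{N+1}_{ag}-b} \leq \phi(1+\norm{z^*})$, producing the $(1+\norm{z^*})^2$ constant), and with $\rho = 2\norm{z^*}$ (which produces the matching lower estimate $F-F^*\geq -\phi(2\norm{z^*})$, giving the $4\norm{z^*}^2$ constant). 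Taking the larger of the two numerators yields the $\max\{(1+\norm{z^*})^2,\,4\norm{z^*}^2\}$ factor in \eqref{GALeq4}.

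The delicate step is the primal absorption: the weighted coefficient $w_k\cdot\tfrac{L\theta_k^2}{2} = \tfrac{\alpha\eta k}{k+1}$ is strictly less than, but close to, $\alpha\eta$, so the uniform bound $\leq \alpha\eta$ leaves exactly the $(1-\alpha)$ surplus on $-\norm{\hat{x}^k-x^{k+1}}^2$ that Lemma \ref{fact}(ii) with $t=2-\alpha$ demands in order to reconstruct $\hat{x}^{k+1}$ via \eqref{GALhatx}; this is why the restriction $\alpha \in (0,1]$ (equivalently $\eta \geq 2L$) is essential. By contrast, the dual telescoping is tight because $\kappa_k = \tfrac{\kappa\gamma_k}{2}$ is calibrated precisely so that the $\norm{\hat{z}^k-z^{k+1}}^2$ contributions combine into exactly the $-(\kappa-1)\norm{\hat{z}^k-z^{k+1}}^2$ remainder required by the $t=\kappa$ identity, and $\kappa \in (1,2)$ is what makes this remainder have the correct sign.
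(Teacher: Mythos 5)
Your proposal is correct and follows essentially the same route as the paper: multiply \eqref{GALiteration} by $k(k+1)$, verify the parameter reductions (which you compute correctly, including the $\tfrac{2Lk}{k+1}\le \alpha\eta$ absorption that produces the $-(1-\alpha)$ surplus), collapse the primal and dual quadratics via the Lemma \ref{fact} identities together with the updates \eqref{GALhatx} and \eqref{GALhatz}, telescope, and finish with Lemmas \ref{rate1} and \ref{rate2}. The only cosmetic differences are that the paper invokes Lemma \ref{fact}(i) on the primal side where you use \ref{fact}(ii) with $t=2-\alpha$ (the same identity under $t\mapsto 2-t$), and it applies Lemma \ref{rate2} once with $\rho=\max\{1+\norm{z^*},2\norm{z^*}\}$ rather than twice.
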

\begin{proof}
	With the parameters given in \eqref{GALpara}, we multiply $k(k + 1)$ to both sides of \eqref{GALiteration} to have
	\begin{small}
		\begin{equation}\label{GALeq3}
		\begin{aligned}
		&\quad\;k(k+1)\left(F(x^{k+1}_{ag})-F(x)-\inp{z, A x^{k+1}_{ag}-b}\right)-(k-1)k\left(F(x^k_{ag})-F(x)-\inp{z, A x^k_{ag}-b}\right)\\
		&\leq \eta\left(\norm{\hat{x}^k-x}^2-\norm{x^{k+1}-x}^2\right)-\left(\eta-\frac{2kL}{k+1}\right)\norm{\hat{x}^k-x^{k+1}}^2\\
		&\quad +\frac{1}{\gamma}\left(\norm{\hat{z}^k-z}^2-\norm{z^{k+1}-z}^2-(\kappa-1)\norm{\hat{z}^k-z^{k+1}}^2\right)\\
		&\leq \eta\left(\norm{\hat{x}^k-x}^2-\norm{x^{k+1}-x}^2\bm{-\left(1-\alpha\right)\norm{\hat{x}^k-x^{k+1}}^2}\right)\\
		&\quad+\frac{1}{\gamma}\left(\norm{\hat{z}^k-z}^2-\norm{z^{k+1}-z}^2\bm{-(\kappa-1)\norm{\hat{z}^k-z^{k+1}}^2}\right),
		\end{aligned}
		\end{equation}
	\end{small}where the first inequality comes from $k/(k+1)\leq 1$ and $\alpha=\frac{2L}{\eta}$.
	Note that from Lemma \ref{fact}(i) and \eqref{GALhatx}, we have
	\begin{small}
			\begin{equation*}
		\begin{aligned}
		&\quad\;\eta\left(\norm{\hat{x}^k-x}^2-\norm{x^{k+1}-x}^2-\left(1-\beta\right)\norm{\hat{x}^k-x^{k+1}}^2\right)\\
		&= \frac{\eta}{2-\alpha}\left((2-\alpha)\left(\norm{\hat{x}^k-x}^2-\norm{x^{k+1}-x}^2-\left(1-\alpha\right)\norm{\hat{x}^k-x^{k+1}}^2\right)\right)\\
		&= \frac{\eta}{2-\alpha}\left(\norm{\hat{x}^k-x}^2-\norm{x^{k+1}-x-(1-\alpha)(\hat{x}^k-x^{k+1})}^2\right)\\
		&= \frac{\eta}{2-\alpha}\left(\norm{\hat{x}^k-x}^2-\norm{\hat{x}^{k+1}-x}^2\right). 
		\end{aligned}
		\end{equation*}
	\end{small}Analogously, from Lemma \ref{fact}(ii) and \eqref{GALhatz}, we have 
	\begin{small}
		\begin{equation*}
		\begin{aligned}
		&\quad\;\frac{1}{\gamma}\left(\norm{\hat{z}^k-z}^2-\norm{z^{k+1}-z}^2-(\kappa-1)\norm{\hat{z}^k-z^{k+1}}^2\right)\\
		&=\frac{1}{\kappa\gamma}\left(\kappa\left(\norm{\hat{z}^k-z}^2-\norm{z^{k+1}-z}^2-(\kappa-1)\norm{\hat{z}^k-z^{k+1}}^2\right)\right)\\
		&=\frac{1}{\kappa\gamma}\left(\norm{\hat{z}^k-z}^2-\norm{z^{k+1}-z-(\kappa-1)(\hat{z}^k-z^{k+1})}^2\right)\\
		&=\frac{1}{\kappa\gamma}\left(\norm{\hat{z}^k-z}^2-\norm{\hat{z}^{k+1}-z}^2\right).
		\end{aligned}
		\end{equation*}
	\end{small}Combining the above equality into \eqref{GALeq3} and taking $x=x^*$, we have 
	\begin{small}
			\begin{equation}\label{eq:bound2}
		\begin{aligned}
		&\quad\;k(k+1)\left(F(x^{k+1}_{ag})-F(x^*)-\inp{z, A x^{k+1}_{ag}-b}\right)-(k-1)k\left(F(x^k_{ag})-F(x^*)-\inp{z, A x^k_{ag}-b}\right)\\
		&\leq\frac{\eta}{2-\alpha}\left(\norm{\hat{x}^k-x^*}^2-\norm{\hat{x}^{k+1}-x^*}\right)
		+\frac{1}{\kappa\gamma}\left(\norm{\hat{z}^k-z}^2-\norm{\hat{z}^{k+1}-z}^2\right).
		\end{aligned}
		\end{equation}
	\end{small}Applying the above inequality inductively, we have
	\begin{small}
		\begin{equation*}
		\begin{aligned}
		k(k+1)\left(F(x^{k+1}_{ag})-F(x^*)-\inp{z, A x^{k+1}_{ag}-b}\right)
		\leq \frac{\eta}{2-\alpha}\left(\norm{\hat{x}^1-x^*}^2\right)
		+\frac{1}{\kappa\gamma}\left(\norm{\hat{z}^1-z}^2\right).
		\end{aligned}
		\end{equation*}
	\end{small}Noting $\hat{z}^1=0$, $\hat{x}^1=x^1$ and applying Lemmas \ref{rate1} and \ref{rate2} with $\rho=\max \left\{1+\left\|z^*\right\|, 2\left\|z^*\right\|\right\}$, we obtain
	\begin{small}
			\begin{equation*}
		\begin{aligned}
		&\left|F(x^{k+1}_{ag})-F(x^*)\right| \leq \frac{1}{k(k+1)}\left(\frac{\eta}{2-\alpha}\left\|x^1-x^*\right\|^2+\frac{\max \left\{\left(1+\left\|z^*\right\|\right)^2, 4\left\|z^*\right\|^2\right\}}{\gamma\kappa}\right)\\
		&\left\|A x^{k+1}_{ag}-b\right\|\leq \frac{1}{k(k+1)}\left(\frac{\eta}{2-\alpha}\left\|x^1-x^*\right\|^2+\frac{\max \left\{\left(1+\left\|z^*\right\|\right)^2, 4\left\|z^*\right\|^2\right\}}{\gamma\kappa}\right). 
		\end{aligned}
		\end{equation*}
	\end{small}
	This completes the proof.
\end{proof}
\begin{remark}\label{rem:3}
	In a similar manner to GPGM, when $\alpha=1$ and $\kappa=1$, GLALM simplifies to ALALM as described in \cite{xu2017accelerated}. The key modification in GLALM occurs when $\alpha\in(0,1)$ and $\kappa \in (1,2)$. In this case,  the terms \bm{$-\left(1-\alpha\right)\| \hat{x}^k-x^{k+1} \|^2$} and \bm{$-(\kappa-1)\| \hat{z}^k-z^{k+1} \|^2$} in formulation \eqref{GALeq3} from the proof of Theorem \ref{ThGAL} are negative. These terms are employed to construct a new extrapolations \eqref{GALhatx} and \eqref{GALhatz}. The proposed GLALM matches the optimal convergence rate of $O(\frac{1}{k^2})$ with 
	more general coefficients selection compared to the existing accelerated LALM presented in \cite{xu2017accelerated},  as shown in Table \ref{Table 0}.
\end{remark}

\begin{remark}
	Similar to Remark \ref{rem:21}, we provide some numerical evidence to show that the upper bound in \eqref{eq:bound2} from GLALM with $\alpha<1$
	\[
	\frac{\eta}{2-\alpha}\left(\|\hat{x}^1-x^*\|^2- \|\hat{x}^{k+1}-x^*\|^2\right)
	+\frac{1}{\kappa\gamma}\left(\|\hat{z}^1-z\|^2- \|\hat{z}^{k+1}-z\|^2\right)
	\]
	may be tighter than the upper bound with $\alpha=1$ as follows
	\[
	2L\left(\|\hat{x}^1-x^*\|^2- \|\hat{x}^{k+1}-x^*\|^2\right)
	+\frac{1}{\kappa\gamma}\left(\|\hat{z}^1-z\|^2- \|\hat{z}^{k+1}-z\|^2\right).
	\]
	We consider nonnegative linearly constrained quadratic programming (33) the following parameter settings: $(m, n) = (80,1000)$, $(100, 1000)$, and $(120,1500)$, $\alpha=0.2, 0.4, 0.8, 0.8, 1$ and $\kappa=1, 1.5$. The numerical results are presented in Figure \ref{zbFig2}. 
	We can see from Figure \ref{zbFig2} that, in different $(m,n)$ and $\kappa$, the upper bounds from GLALM with $\alpha<1$ are tighter than those from the case $\alpha$=1 (the ALALM is the case when $\alpha=1$ and $\kappa=1$). This suggests that GLALM may lead to a larger decrease. 
		
	\begin{figure}[htbp]
		\centering
		\subfigure[]{
			\includegraphics[scale=0.22]{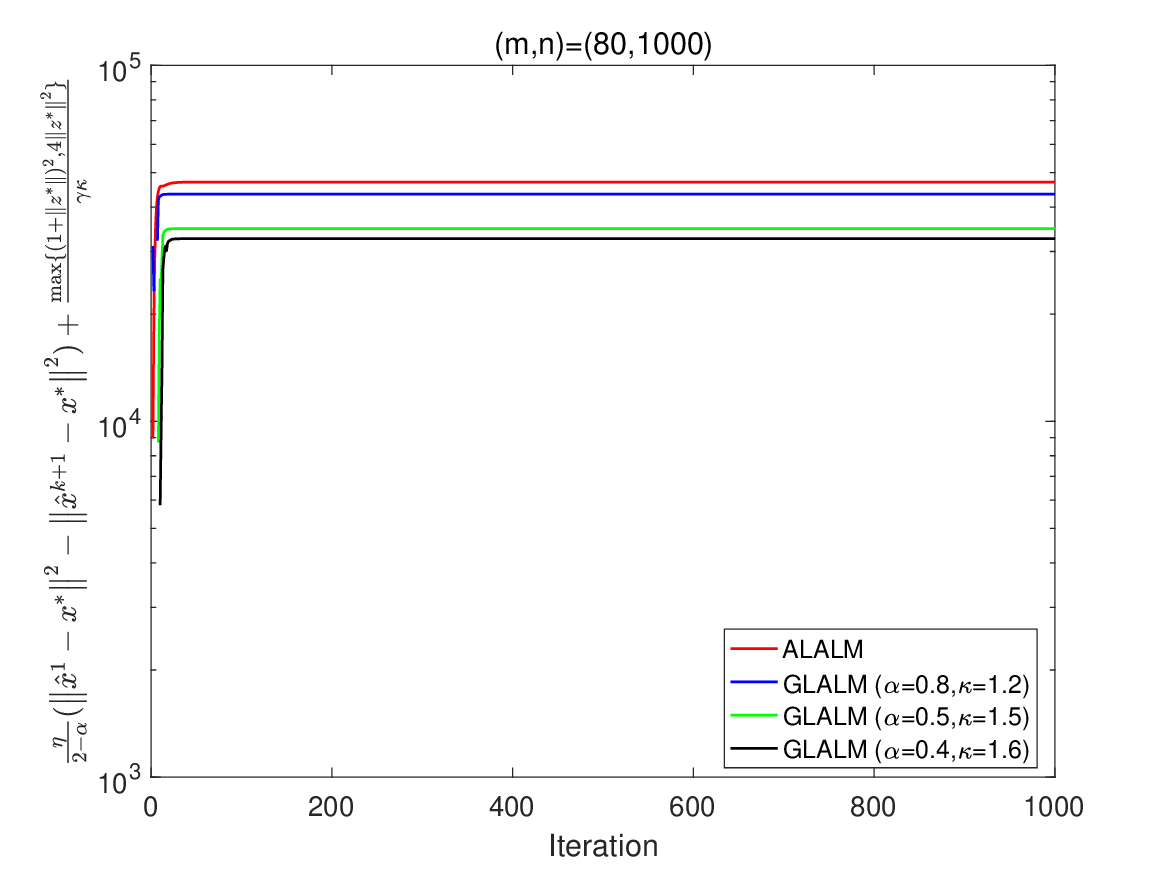}
		}
		\subfigure[]{
			\includegraphics[scale=0.22]{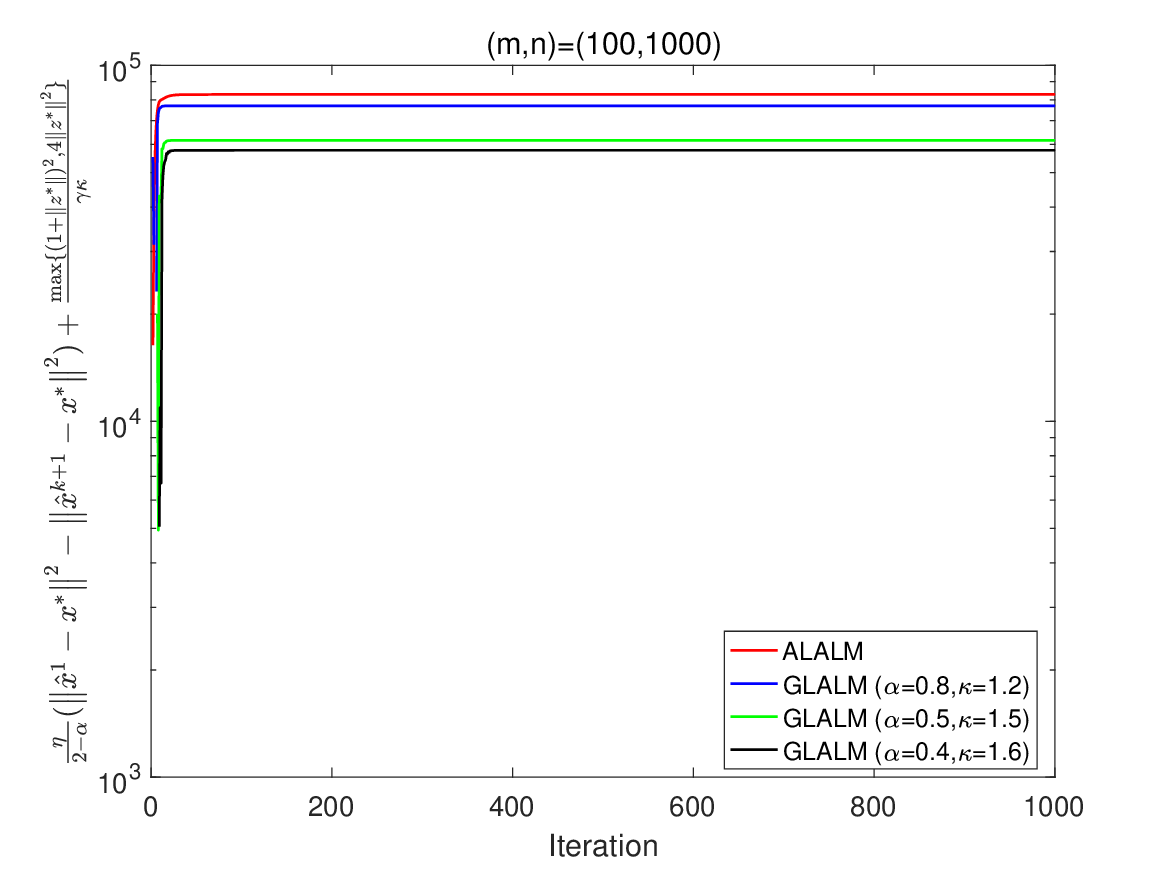}
		}
		\subfigure[]{
			\includegraphics[scale=0.22]{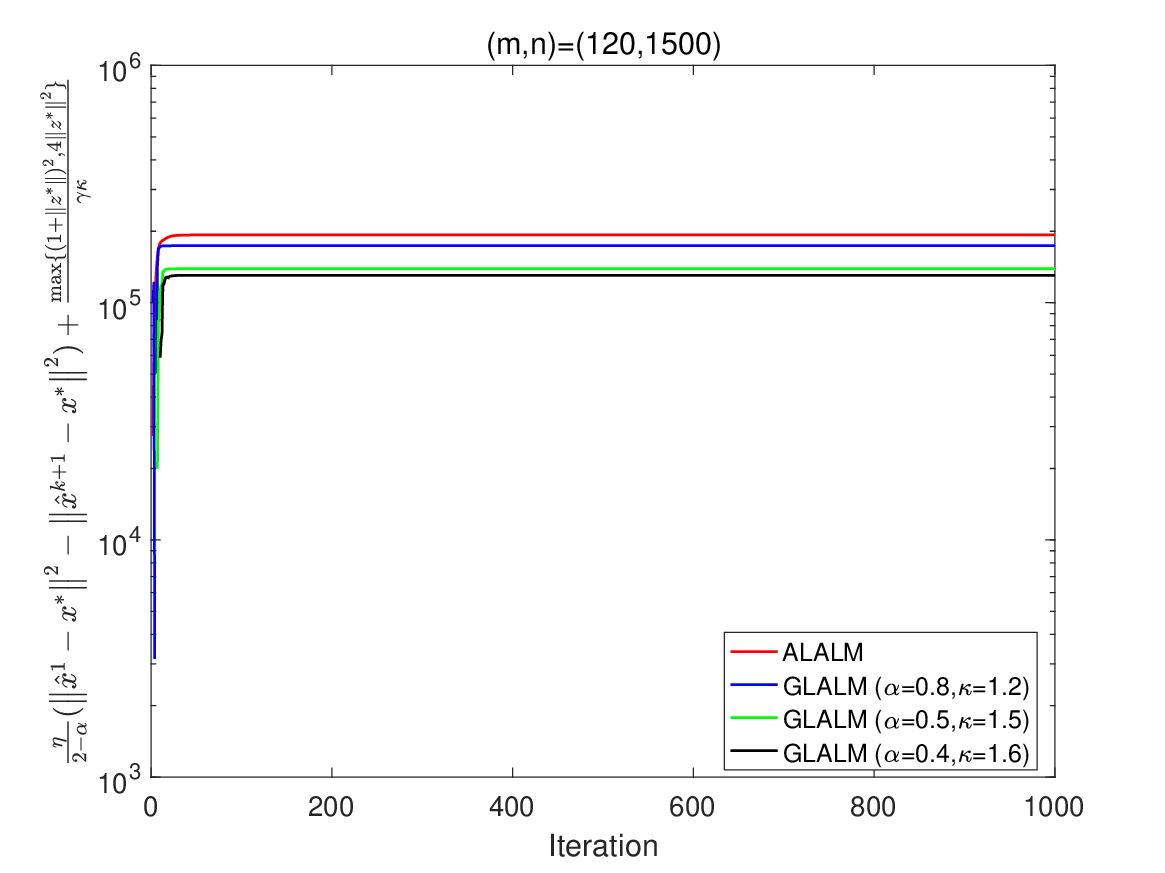}
		}
		\caption{The upper bounds of GLALM and ALALM}
		\label{zbFig2}
	\end{figure}
\end{remark}

\section{The G\"uler-type accelerated L-ADMM}

After accumulating the experience of improving PGM and LALM in Sections 2 and 3, we now discuss in this section how to incorporate the G\"uler acceleration technique into L-ADMM. This leads to the proposed Algorithm \ref{GADMM} named GLADMM.

\begin{algorithm}[h]  
	\caption{G\"uler-type accelerated L-ADMM (GLADMM)}  
	\label{GADMM}  
	\begin{algorithmic}[H]  
		\Require  Choose $x^1 \in X$ and $y^1 \in Y$ such that $B y^1=A x^1+b$. Set $x^1_{ag}=\hat{x}^1=x^1, y^1_{ag}=\hat{y}^1=y^1$ and $z^1_{ag}=\hat{z}^1=z^1=0$. 
		\State $k \leftarrow 1$  
		\Repeat  
		\State Generate
		\begin{align}
		&x^k_{md}=\left(1-\theta_k\right) x^k_{ag}+\theta_k \hat{x}^k\label{GADMMxmd}\\
		&\begin{aligned}
		x^{k+1}= & \underset{x \in X}{\operatorname{argmin}}\left\langle\nabla f(x^k_{md}),x\right\rangle 
		+\frac{\lambda_k}{2}\left\|B \hat{y}^k-A x-b\right\|^2\\
		&~~~~~~~~~~~~+\left\langle \hat{z}^k, A x\right\rangle+\frac{\eta_k}{2}\left\|x-\hat{x}^k\right\|^2
		\end{aligned}\label{GADMMx}\\
		&\bm{\hat{x}^{k+1}=(2-\alpha)x^{k+1}+(\alpha-1)\hat{x}^k}\label{GADMMhatx}\\
		&x^{k+1}_{ag}=\left(1-\theta_k\right) x^k_{a g}+\theta_k x^{k+1}\label{GADMMxag}\\
		&y^{k+1}=\underset{y \in Y}{\operatorname{argmin}}~ g(y)-\left\langle \hat{z}^k B y\right\rangle+\frac{\tau_k}{2}\left\|B y-A x^{k+1}-b\right\|^2\label{GADMMy}\\
		&\bm{\hat{y}^{k+1}=(2-\beta)y^{k+1}+(\beta-1)\hat{y}^k}\label{GADMMhaty}\\
		&y^{k+1}_{ag}=\left(1-\theta_k\right) y^k_{ag}+\theta_k y^{k+1}\label{GADMMyag}\\
		&z^{k+1}=\hat{z}^k-\gamma_k(B y^{k+1}-A x^{k+1}-b)\label{GADMMz}\\
		&\bm{\hat{z}^{k+1}=(1-\kappa)\hat{z}^k+\kappa z^{k+1}}\label{GADMMhatz}\\
		&z^{k+1}_{ag}=\left(1-\theta_k\right) z^k_{ag}+\theta_k z^{k+1}\label{GADMMzag}
		\end{align}
		\State $k \leftarrow k+1$
		\Until $\left(x^{k+1}_{ag}, y^{k+1}_{ag}\right)$ satisfies the stop criterion.
		\Ensure  $\left(x^{k+1}_{ag}, y^{k+1}_{ag}\right)$ 
	\end{algorithmic}  
\end{algorithm}
Note that if, in Algorithm \ref{GADMM}, $\alpha=1$, $\beta=1$ and $\kappa=1$, then $\hat{x}^{k}=x^{k}$, $\hat{y}^{k}=y^{k}$ and $\hat{z}^{k}=z^{k}$, and the  Algorithm \ref{GADMM}  becomes accelerated AL-ADMM in \cite{ouyang2015accelerated}.

To study the convergence analysis of Algorithm \ref{GADMM}, we first define the following gap function.\\
{\bf Gap function:} For any $\tilde{w}=(\tilde{x}, \tilde{y}, \tilde{z})$ and $w=(x, y, z)$, we define
\begin{equation}\label{GADMMGap}
\begin{aligned}
Q(\tilde{x}, \tilde{y}, \tilde{z} ; x, y, z):= & {[f(x)+g(y)-\langle\tilde{z}, B y-A x-b\rangle] } \\
& -[f(\tilde{x})+g(\tilde{y})-\langle z, B \tilde{y}-A \tilde{x}-b\rangle] .
\end{aligned}
\end{equation}

\begin{lemma}\label{l:GLADMM}
	Let $(x^*,y^*)$ denote a solution of \eqref{AECCO} and  be the sequence $\left\{\left(x^k_{ag}, y^k_{ag}, z^k_{ag}\right)\right\}$ be generated by
	Algorithm \ref{GADMM}. There exists $\alpha\in (0,1)$, $\beta\in (0,1)$ and $\kappa>1$ such that $\frac{L\theta_k}{\eta_k}\leq \alpha$, $\frac{1}{\xi_k}\leq \beta$, $\frac{\tau_k+(1-\xi_k)\lambda_k}{\gamma_k}\geq \kappa $. Let $\theta_1=1$ and
	\[
	\Gamma_k= \begin{cases}\Gamma_1 & \text { when } k=1, \\ \left(1-\theta_{k}\right) \Gamma_{k-1} & \text { when } k>1 .\end{cases}
	\]
	Then, we have for any $z$
	\begin{footnotesize}
		\begin{equation}\label{GADMMres}
		\begin{aligned}
		&\quad\; \frac{1}{\Gamma_k} Q(x^*,y^*,z;w^{k+1}_{ag})-\frac{1-\theta_k}{\Gamma_k} Q(x^*,y^*,z; w^k_{a g}) \\
		&\leq   \frac{\theta_k}{\Gamma_k}\left\{\frac{\eta_k}{2(2-\alpha)}\left(\left\|\hat{x}^k-x^*\right\|^2-\norm{\hat{x}^{k+1}-x^*}^2\right)
		+\frac{\lambda_k}{2(2-\beta)}\left(\norm{B \hat{y}^k-B y^*}^2-\norm{B\hat{y}^{k+1}-By^*}^2\right)\right.\\
		&\quad
		\left.+\frac{1}{2\gamma_k\kappa}\left(\norm{\hat{z}^k-z}^2-\norm{\hat{z}^{k+1}-z}^2\right)
		+\frac{\tau_k-\lambda_k}{2}\left\|A\left(x^{k+1}-x^*\right)\right\|^2 
		-\frac{\tau_k-\lambda_k}{2}\left\|B y^{k+1}-By^*\right\|^2\right\}.
		\end{aligned}
		\end{equation}
	\end{footnotesize}
\end{lemma}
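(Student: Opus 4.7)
The plan is to mirror the single-block analysis used for GLALM (Lemma \ref{l:GLALM}) but carry it through with both primal blocks $x,y$ and with the gap function $Q$ defined in \eqref{GADMMGap}. First I would exploit the $L$-smoothness of $f$ together with the relation $x^{k+1}_{ag}-x^k_{md}=\theta_k(x^{k+1}-\hat{x}^k)$ from \eqref{GADMMxmd} and \eqref{GADMMxag}, exactly as in \eqref{GALeq1}, to produce
\begin{equation*}
f(x^{k+1}_{ag})-f(x^*)\leq(1-\theta_k)(f(x^k_{ag})-f(x^*))+\theta_k\langle\nabla f(x^k_{md}),x^{k+1}-x^*\rangle+\tfrac{L\theta_k^2}{2}\|\hat{x}^k-x^{k+1}\|^2.
\end{equation*}
Using convexity of $g$ together with \eqref{GADMMyag} I would split $g(y^{k+1}_{ag})-g(y^*)$ into a $(1-\theta_k)$-weighted old term and a $\theta_k$-weighted new term $g(y^{k+1})-g(y^*)$, and assemble $Q(x^*,y^*,z;w^{k+1}_{ag})-(1-\theta_k)Q(x^*,y^*,z;w^k_{ag})$ with the multiplier contribution $-\theta_k\langle z,By^{k+1}-Ax^{k+1}-b\rangle$ extracted from the $\langle z,\cdot\rangle$ part.

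The second step is to feed in the first-order optimality conditions of the two subproblems \eqref{GADMMx} and \eqref{GADMMy}. The $x$-subproblem yields an inner product against $\nabla f(x^k_{md})+A^{\top}\hat{z}^k-\lambda_k A^{\top}(B\hat{y}^k-Ax^{k+1}-b)+\eta_k(x^{k+1}-\hat{x}^k)$, while the $y$-subproblem yields an inner product against $\partial g(y^{k+1})-B^{\top}\hat{z}^k+\tau_k B^{\top}(By^{k+1}-Ax^{k+1}-b)$. Using \eqref{GADMMz} to rewrite $By^{k+1}-Ax^{k+1}-b=\frac{1}{\gamma_k}(\hat{z}^k-z^{k+1})$ and then applying Lemma \ref{fact}(iv) to every inner product produces the three canonical acceleration triples: the $x$-triple $\|\hat{x}^k-x^*\|^2-\|x^{k+1}-x^*\|^2-\|\hat{x}^k-x^{k+1}\|^2$ weighted by $\eta_k/2$, a $y$-triple weighted by a mixture of $\lambda_k$ and $\tau_k$ involving $B(\hat{y}^k-y^*)$ and $B(\hat{y}^k-y^{k+1})$, and a $z$-triple $\|\hat{z}^k-z\|^2-\|z^{k+1}-z\|^2+\|\hat{z}^k-z^{k+1}\|^2$ weighted by $1/(2\gamma_k)$.

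The third step is to absorb the residual negative quadratics via the parameter conditions and the extrapolations \eqref{GADMMhatx}, \eqref{GADMMhaty}, \eqref{GADMMhatz}. The $\frac{L\theta_k^2}{2}\|\hat{x}^k-x^{k+1}\|^2$ coming from smoothness is dominated by $\frac{\theta_k\eta_k}{2}\|\hat{x}^k-x^{k+1}\|^2$ thanks to $L\theta_k/\eta_k\leq\alpha$, leaving behind $-\frac{\theta_k\eta_k(1-\alpha)}{2}\|\hat{x}^k-x^{k+1}\|^2$. An analogous reserve $-\frac{\theta_k\lambda_k(1-\beta)}{2}\|B(\hat{y}^k-y^{k+1})\|^2$ is obtained in the $y$ block once the linearization residual $B\hat{y}^k-Ax^{k+1}-b$ is split (using $B\hat{y}^k-Ax^{k+1}-b=(B\hat{y}^k-By^{k+1})+(By^{k+1}-Ax^{k+1}-b)$) with Young's inequality of parameter $\xi_k$, which is exactly where the condition $1/\xi_k\leq\beta$ enters; the leftover $\lambda_k(1-\xi_k)$ portion of the $z$-term combines with $\tau_k$ so that $(\tau_k+(1-\xi_k)\lambda_k)/\gamma_k\geq\kappa$ yields the reserve $-\frac{\theta_k(\kappa-1)}{2\gamma_k}\|\hat{z}^k-z^{k+1}\|^2$. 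Then, exactly as in the proof of Theorem \ref{ThGAL}, Lemma \ref{fact}(i)--(ii) together with the extrapolations convert the three completed triples into $\frac{\eta_k}{2(2-\alpha)}(\|\hat{x}^k-x^*\|^2-\|\hat{x}^{k+1}-x^*\|^2)$, $\frac{\lambda_k}{2(2-\beta)}(\|B\hat{y}^k-By^*\|^2-\|B\hat{y}^{k+1}-By^*\|^2)$, and $\frac{1}{2\gamma_k\kappa}(\|\hat{z}^k-z\|^2-\|\hat{z}^{k+1}-z\|^2)$; the asymmetric cross term $\frac{\tau_k-\lambda_k}{2}(\|A(x^{k+1}-x^*)\|^2-\|By^{k+1}-By^*\|^2)$ appears naturally from the algebra $\|B(y^{k+1}-y^*)\|^2-\|B\hat{y}^k-Ax^{k+1}-b\|^2$ by using the feasibility relation $By^*=Ax^*+b$. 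Dividing by $\Gamma_k$ and invoking the recursion $(1-\theta_k)/\Gamma_k=1/\Gamma_{k-1}$ yields \eqref{GADMMres}.

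The main obstacle will be the bookkeeping around the $\xi_k$-splitting of $B\hat{y}^k-Ax^{k+1}-b$. This residual appears in the $x$-subproblem with weight $\lambda_k$ but must be decomposed so that one share cancels into a negative $\|B(\hat{y}^k-y^{k+1})\|^2$ term (absorbed by the $\beta$ condition) while the remaining share merges with the $\tau_k$ coming from the $y$-subproblem to reinforce the $z$-direction (absorbed by the $\kappa$ condition). Ensuring that all three conditions $L\theta_k/\eta_k\leq\alpha$, $1/\xi_k\leq\beta$, and $(\tau_k+(1-\xi_k)\lambda_k)/\gamma_k\geq\kappa$ are used simultaneously to generate exactly the non-positive residuals, while the residual cross term collapses to $\frac{\tau_k-\lambda_k}{2}(\|A(x^{k+1}-x^*)\|^2-\|By^{k+1}-By^*\|^2)$ without spurious leftovers, is the delicate part of the calculation.
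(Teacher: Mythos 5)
Your proposal follows essentially the same route as the paper's proof: the smoothness/convexity assembly of the gap recursion, the two optimality conditions rewritten through the $z$-update, Lemma \ref{fact}(iv) to produce the three quadratic triples, the $\xi_k$-Young splitting of $\|B\hat{y}^k-Ax^{k+1}-b\|^2$ that routes one share to the $\beta$-condition and merges the $(1-\xi_k)\lambda_k$ share with $\tau_k$ for the $\kappa$-condition, and finally Lemma \ref{fact}(i)--(ii) with the extrapolations \eqref{GADMMhatx}, \eqref{GADMMhaty}, \eqref{GADMMhatz} to telescope the completed triples. The only cosmetic difference is that the recursion $(1-\theta_k)/\Gamma_k=1/\Gamma_{k-1}$ is not needed for the lemma itself (one simply multiplies by $\theta_k/\Gamma_k$); it enters only in the subsequent telescoping of Theorem \ref{GADMMpa}.
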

\begin{proof}
	Similar to the inequality \eqref{GPGeq0}, by \eqref{AECCO}, \eqref{GADMMxmd} and \eqref{GADMMxag}, we have
	\begin{small}
		\begin{equation}\label{GADMMeq1}
		\begin{aligned}
		f(x^{k+1}_{ag})
		\leq (1-\theta_k)f(x^k_{ag})+\theta_kf(x)+\theta_k (\inp{\nabla f(x^k_{md}),x^{k+1}-x})+\frac{L\theta_k^2}{2}\norm{\hat{x}^k-x^{k+1}}^2.
		\end{aligned}
		\end{equation}
	\end{small}By \eqref{GADMMGap}, \eqref{GADMMyag}, \eqref{GADMMzag}, \eqref{GADMMeq1} and the convexity of $g(\cdot)$, we conclude that
	\begin{small}
			\begin{equation}\label{GADMMeq2}
		\begin{aligned}
		&\quad\;Q(w;w^{k+1}_{ag})-\left(1-\theta_k\right) Q(w; w^k_{ag}) \\
		&=  {\left[f\left(x^{k+1}_{ag}\right)+g(y^{k+1}_{ag})
			-\left\langle z, B y^{k+1}_{ag}-A x^{k+1}_{ag}-b\right\rangle\right]
			-\left[f(x)+g(y)-\left\langle z^{k+1}_{ag}, B y-A x-b\right\rangle\right] } \\
		&\quad -\left(1-\theta_k\right)\left[f(x^k_{ag})+g(y^k_{ag})
		-\left\langle z, B y^k_{ag}-A x^k_{ag}-b\right\rangle\right] \\
		&\quad +\left(1-\theta_k\right)\left[f(x)+g(y)-\left\langle z^k_{ag}, B y-A x-b\right\rangle\right] \\
		&= {\left[f(x^{k+1}_{ag})-\left(1-\theta_k\right) f(x^k_{ag})-\theta_k f(x)\right]
			+\left[g(y^{k+1}_{ag})-\left(1-\theta_k\right) g(y^k_{ag})-\theta_k g(y)\right] } \\
		&\quad -\theta_k\left\langle z, B y^{k+1}-A x^{k+1}-b\right\rangle+\theta_k\left\langle z^{k+1}, B y-A x-b\right\rangle \\
		&\leq \theta_k\left\langle\nabla f\left(x^k_{md}\right), x^{k+1}-x\right\rangle+\theta_k\left(g(y^{k+1})-g(y)\right)
		+\frac{L \theta_k^2}{2}\left\|x^{k+1}-\hat{x}^k\right\|^2 \\
		& \quad-\theta_k\left\langle z, B y^{k+1}-A x^{k+1}-b\right\rangle+\theta_k\left\langle z^{k+1}, B y-A x-b\right\rangle. 
		\end{aligned}
		\end{equation}
	\end{small}Next, we examine the optimality conditions in \eqref{GADMMx} and \eqref{GADMMy}. For all $x \in X$, $y \in Y$ and $By-Ax=b$, we have
	\begin{small}
			\begin{equation*}
		\begin{aligned}
		&\left\langle\nabla f(x^k_{md})+\eta_k(x^{k+1}-\hat{x}^k), x^{k+1}-x\right\rangle-\left\langle\lambda_k\left(B \hat{y}^k-A x^{k+1}-b\right)-\hat{z}^k, A\left(x^{k+1}-x\right)\right\rangle \leq 0\\
		&g(y^{k+1})-g(y)+\left\langle\tau_k\left(B y^{k+1}-A x^{k+1}-b\right)-\hat{z}^k, B(y^{k+1}-y)\right\rangle\leq 0. 
		\end{aligned}
		\end{equation*}
	\end{small}Observing from \eqref{GADMMz} that $B y^{k+1}-A x^{k+1}-b=\left(\hat{z}^k-z^{k+1}\right) / \gamma_k$ and 
	$B \hat{y}^k-A x^{k+1}-b=\left(\hat{z}^k-z^{k+1}\right) / \gamma_k+B(\hat{y}^k-y^{k+1})$, the optimality conditions become
	\begin{small}
			\begin{equation*}
		\begin{aligned}
		&\begin{aligned}
		&\left\langle\nabla f(x^k_{md})+\eta_k(x^{k+1}-\hat{x}^k),x^{k+1}-x\right\rangle
		+\left\langle\left(\frac{\lambda_k}{\gamma_k}-1\right)\left(\hat{z}^k-z^{k+1}\right)-z^{k+1},-A\left(x^{k+1}-x\right)\right\rangle \\
		&+\lambda_k\left\langle B\left(\hat{y}^k-y^{k+1}\right),-A\left(x^{k+1}-x\right)\right\rangle \leq 0,
		\end{aligned}\\
		&g(y^{k+1})-g(y)+\left\langle\left(\frac{\tau_k}{\gamma_k}-1\right)\left(\hat{z}^k-z^{k+1}\right)-z^{k+1}, B\left(y^{k+1}-y\right)\right\rangle \leq 0.
		\end{aligned}
		\end{equation*}
	\end{small}Therefore,
	\begin{small}
			\begin{equation}\label{GADMMeq3}
		\begin{aligned}
		&\quad\left\langle\nabla f(x^k_{md}),x^{k+1}-x\right\rangle+g(y^{k+1})-g(y)
		-\left\langle z, B y^{k+1}-A x^{k+1}-b\right\rangle+\left\langle z^{k+1}, B y-A x-b\right\rangle\\
		&\leq  \left\langle\eta_k\left(\hat{x}^k-x^{k+1}\right), x^{k+1}-x\right\rangle+\left\langle z^{k+1}-z, B y^{k+1}-A x^{k+1}-b\right\rangle \\
		&\quad +\left\langle\left(\frac{\lambda_k}{\gamma_k}-1\right)\left(\hat{z}^k-z^{k+1}\right),A\left(x^{k+1}-x\right)\right\rangle
		-\left\langle\left(\frac{\tau_k}{\gamma_k}-1\right)\left(\hat{z}^k-z^{k+1}\right), B\left(y^{k+1}-y\right)\right\rangle \\
		&\quad+\lambda_k\left\langle B\left(\hat{y}^k-y^{k+1}\right),A\left(x^{k+1}-x\right)\right\rangle. 
		\end{aligned}
		\end{equation}
	\end{small}Three observations on the right-hand side of \eqref{GADMMeq3} are in place. First, by \eqref{GADMMz} we have
	\begin{small}
			\begin{equation}
		\begin{aligned}
		&\quad\inp{\eta_k\left(\hat{x}^k-x^{k+1}\right), x^{k+1}-x}+\left\langle z^{k+1}-z, B y^{k+1}-A x^{k+1}-b\right\rangle\\
		&=\eta_k\inp{\hat{x}^k-x^{k+1}, x^{k+1}-x}+\frac{1}{\gamma_k}\left\langle z^{k+1}-z, \hat{z}^k-z^{k+1}\right\rangle \\
		&=\frac{\eta_k}{2}\left(\left\|\hat{x}^k-x\right\|^2-\left\|x^{k+1}-x\right\|^2-\left\|\hat{x}^k-x^{k+1}\right\|^2\right) \\
		&\quad+\frac{1}{2 \gamma_k}\left(\left\|\hat{z}^k-z\right\|^2-\left\|z^{k+1}-z\right\|^2-\left\|\hat{z}^k-z^{k+1}\right\|^2\right),
		\end{aligned}
		\end{equation}
	\end{small}and second, by \eqref{GADMMz} we can see that
	\begin{small}
			\begin{equation*}
		\begin{aligned}
		B\left(y^{k+1}-y\right)&=\frac{1}{\gamma_k}\left(\hat{z}^k-z^{k+1}\right)+\left(A x^{k+1}-A x\right)-(B y-A x-b)\\
		&=\frac{1}{\gamma_k}\left(\hat{z}^k-z^{k+1}\right)+\left(A x^{k+1}-A x\right),
		\end{aligned}
		\end{equation*}
	\end{small}and 
	\begin{footnotesize}
			\begin{equation}
		\begin{aligned}
		&\quad\left\langle\left(\frac{\lambda_k}{\gamma_k}-1\right)\left(\hat{z}^k-z^{k+1}\right),A\left(x^{k+1}-x\right)\right\rangle \\
		& \quad-\left\langle\left(\frac{\tau_k}{\gamma_k}-1\right)\left(\hat{z}^k-z^{k+1}\right), \frac{1}{\gamma_k}\left(\hat{z}^k-z^{k+1}\right)+\left(A x^{k+1}-A x\right)\right\rangle\\
		& =\frac{\tau_k-\lambda_k}{\gamma_k}\left\langle \hat{z}^k-z^{k+1},-A\left(x^{k+1}-x\right)\right\rangle
		-\frac{\tau_k-\gamma_k}{\gamma_k^2}\left\|\hat{z}^k-z^{k+1}\right\|^2 \\
		& =\frac{\tau_k-\lambda_k}{2}\left[\frac{1}{\gamma_k^2}\norm{\hat{z}^k-z^{k+1}}^2
		+\left\|A\left(x^{k+1}-x\right)\right\|^2-\left\|\frac{1}{\gamma_k}\left(\hat{z}^k-z^{k+1}\right)+A\left(x^{k+1}-x\right)\right\|^2\right] \\
		&\quad -\frac{\tau_k-\gamma_k}{\gamma_k^2}\left\|\hat{z}^k-z^{k+1}\right\|^2 \\
		& =\frac{\tau_k-\lambda_k}{2}\left[\frac{1}{\gamma_k^2}\norm{\hat{z}^k-z^{k+1}}^2
		+\left\|A\left(x^{k+1}-x\right)\right\|^2
		-\left\|B y^{k+1}-A x-b\right\|^2\right]-\frac{\tau_k-\gamma_k}{\gamma_k^2}\left\|\hat{z}^k-z^{k+1}\right\|^2,
		\end{aligned}
		\end{equation}
	\end{footnotesize}where the second equality comes from $\inp{a,b}=1/2(a^2+b^2-(a-b)^2)$. 
	Third, from \eqref{GADMMeq3} we have
	\begin{small}
			\begin{equation}\label{GADMMeq5}
		\begin{aligned}
		& \quad\;\lambda_k\left\langle B\left(\hat{y}^k-y^{k+1}\right),A\left(x^{k+1}-x\right)\right\rangle \\
		&= \frac{\lambda_k}{2}\left(\left\|B \hat{y}^k-A x-b\right\|^2-\left\|B y^{k+1}-A x-b\right\|^2\right. \\
		& \left.\quad+\left\|B y^{k+1}-A x^{k+1}-b\right\|^2-\left\|B \hat{y}^k-A x^{k+1}-b\right\|^2\right) \\
		& =\frac{\lambda_k}{2}\left(\left\|B \hat{y}^k-A x-b\right\|^2-\left\|B y^{k+1}-A x-b\right\|^2\right) \\
		&\quad +\frac{\lambda_k}{2 \gamma_k^2}\left\|\hat{z}^k-z^{k+1}\right\|^2
		-\frac{\lambda_k}{2}\left\|B \hat{y}^k-B y^{k+1}+\frac{1}{\gamma_k}\left(\hat{z}^k-z^{k+1}\right)\right\|^2\\
		& \leq \frac{\lambda_k}{2}\left(\left\|B \hat{y}^k-A x-b\right\|^2-\left\|B y^{k+1}-A x-b\right\|^2\right)  \\
		&\quad +\frac{\lambda_k}{2 \gamma_k^2}\left\|\hat{z}^k-z^{k+1}\right\|^2
		+\frac{\lambda_k}{2}\left(\left(\frac{1}{\xi_k}-1\right)\norm{B \hat{y}^k-B y^{k+1}}^2+\frac{1}{\gamma^2_k}\left(\xi_k-1\right)\norm{\hat{z}^k-z^{k+1}}^2\right)\\
		& = \frac{\lambda_k}{2}\left(\left\|B \hat{y}^k-A x-b\right\|^2-\left\|B y^{k+1}-A x-b\right\|^2\right)  \\
		&\quad +\frac{\lambda_k\xi_k}{2 \gamma_k^2}\left\|\hat{z}^k-z^{k+1}\right\|^2
		+\frac{\lambda_k}{2}\left(\frac{1}{\xi_k}-1\right)\norm{B \hat{y}^k-B y^{k+1}}^2,
		\end{aligned}
		\end{equation}
	\end{small}where the last inequality comes from $-(a+b)^2\leq (1/\xi-1)a^2+(\xi-1)b^2$ for $\xi>0$. 
	Applying \eqref{GADMMeq3}-\eqref{GADMMeq5} to \eqref{GADMMeq2}, we have
\begin{small}
		\begin{equation*}
	\begin{aligned}
	&\quad\; \frac{1}{\Gamma_k} Q(w;w^{k+1}_{ag})-\frac{1-\theta_k}{\Gamma_k} Q(w; w^k_{a g}) \\
	& \leq \frac{\theta_k}{\Gamma_k}\left\{ \frac{\eta_k}{2}\left(\left\|\hat{x}^k-x\right\|^2-\left\|x^{k+1}-x\right\|^2
	-\left(1-\frac{L\theta_k}{\eta_k}\right)\left\|\hat{x}^k-x^{k+1}\right\|^2\right)\right.\\
	&\quad+\frac{1}{2 \gamma_k}\left(\left\|\hat{z}^k-z\right\|^2-\left\|z^{k+1}-z\right\|^2\right)
	-\left(\frac{\tau_k+(1-\xi_k)\lambda_k}{\gamma_k}-1\right)\left\|\hat{z}^k-z^{k+1}\right\|^2 \\
	&\quad+\frac{\lambda_k}{2}\left(\left\|B \hat{y}^k-A x-b\right\|^2-\left\|B y^{k+1}-A x-b\right\|^2
	-\left(1-\frac{1}{\xi_k}\right)\norm{B \hat{y}^k-B y^{k+1}}^2\right) \\
	&\quad \left.+\frac{\tau_k-\lambda_k}{2}\left\|A\left(x^{k+1}-x\right)\right\|^2 
	-\frac{\tau_k-\lambda_k}{2}\left\|B y^{k+1}-A x-b\right\|^2\right\}.
	\end{aligned}
	\end{equation*}
\end{small}Form the set of $\frac{L\theta_k}{\eta_k}\leq \alpha \in (0,1)$, $\frac{1}{\xi_k}\leq \beta\in(0,1)$, $\frac{\tau_k+(1-\xi_k)\lambda_k}{\gamma_k}\geq \kappa >1$, letting $x=x^*$ and $y=y^*$ and, using $By^*=Ax^*+b$ in the above inequality, we have  
	\begin{small}
			\begin{equation}\label{GADMMeq6}
		\begin{aligned}
		&\quad\; \frac{1}{\Gamma_k} Q(x^*,y^*,z;w^{k+1}_{ag})-\frac{1-\theta_k}{\Gamma_k} Q(x^*,y^*,z; w^k_{a g}) \\
		& \leq \frac{\theta_k}{\Gamma_k}\left\{ \frac{\eta_k}{2}\left(\left\|\hat{x}^k-x^*\right\|^2-\left\|x^{k+1}-x^*\right\|^2
		\bm{-\left(1-\alpha\right)\left\|\hat{x}^k-x^{k+1}\right\|^2}\right)\right.\\
		&\quad+\frac{\lambda_k}{2}\left(\left\|B \hat{y}^k-By^*\right\|^2-\left\|B y^{k+1}-By^*\right\|^2
		\bm{-\left(1-\beta\right)\norm{B \hat{y}^k-B y^{k+1}}^2}\right) \\
		&\quad+\frac{1}{2 \gamma_k}\left(\left\|\hat{z}^k-z\right\|^2-\left\|z^{k+1}-z\right\|^2\bm{-(\kappa-1)\norm{\hat{z}^k-z^{k+1}}^2}\right)\\
		&\quad
		\left.+\frac{\tau_k-\lambda_k}{2}\left\|A\left(x^{k+1}-x^*\right)\right\|^2 
		-\frac{\tau_k-\lambda_k}{2}\left\|B y^{k+1}-By^*\right\|^2\right\}.
		\end{aligned}
		\end{equation}
	\end{small}By Lemma \ref{fact} (i) and \eqref{GADMMhatx}, we have 
	\begin{small}
			\begin{equation*}
		\begin{aligned}
		&\quad\;\left\|\hat{x}^k-x^*\right\|^2-\left\|x^{k+1}-x^*\right\|^2-\left(1-\alpha\right)\left\|\hat{x}^k-x^{k+1}\right\|^2\\
		&=\frac{1}{2-\alpha}\left((2-\alpha)
		\left\|\hat{x}^k-x^*\right\|^2-\left\|x^{k+1}-x^*\right\|^2
		-\left(1-\alpha\right)\left\|\hat{x}^k-x^{k+1}\right\|^2\right)\\
		&=\frac{1}{2-\alpha}\left(\left\|\hat{x}^k-x^*\right\|^2-\norm{(x^{k+1}-x^*)-(1-\alpha)(\hat{x}^k-x^{k+1})}^2\right)\\
		&=\frac{1}{2-\alpha}\left(\left\|\hat{x}^k-x^*\right\|^2-\norm{(2-\alpha)x^{k+1}+(\alpha-1)\hat{x}^k-x^*}^2\right)\\
		&=\frac{1}{2-\alpha}\left(\left\|\hat{x}^k-x^*\right\|^2-\norm{\hat{x}^{k+1}-x^*}^2\right),
		\end{aligned}
		\end{equation*}
	\end{small}and from \eqref{GADMMhaty}, we can see that 
	\begin{small}
			\begin{equation*}
		\begin{aligned}
		&\quad\;\left\|B \hat{y}^k-By^*\right\|^2-\left\|B y^{k+1}-By^*\right\|^2
		-\left(1-\beta\right)\norm{B \hat{y}^k-B y^{k+1}}^2\\
		&=\frac{1}{2-\beta}\left(\norm{B \hat{y}^k-B y^*}^2-\norm{(2-\beta)By^{k+1}+(\beta-1)B\hat{y}^k-By^*}^2\right)\\
		&=\frac{1}{2-\beta}\left(\norm{B \hat{y}^k-B y^*}^2-\norm{B\hat{y}^{k+1}-By^*}^2\right).
		\end{aligned}
		\end{equation*}
	\end{small}Analogously, from Lemma \ref{fact} (ii) and \eqref{GADMMhatz}, we obtain
\begin{small}
		\begin{equation*}
	\begin{aligned}
	&\quad\;\left\|\hat{z}^k-z\right\|^2-\left\|z^{k+1}-z\right\|^2-(\kappa-1)\norm{\hat{z}^k-z^{k+1}}^2\\
	&=\frac{1}{\kappa}\left(\kappa\left(\left\|\hat{z}^k-z\right\|^2-\left\|z^{k+1}-z\right\|^2-(\kappa-1)\norm{\hat{z}^k-z^{k+1}}^2\right)\right)\\
	&=\frac{1}{\kappa}\left(\norm{\hat{z}^k-z}^2-\norm{(z^{k+1}-z)-(\kappa-1)(\hat{z}^k-z^{k+1})}^2\right)\\
	&=\frac{1}{\kappa}\left(\norm{\hat{z}^k-z}^2-\norm{(1-\kappa)\hat{z}^k+\kappa z^{k+1}-z}^2\right)\\
	&=\frac{1}{\kappa}\left(\norm{\hat{z}^k-z}^2-\norm{\hat{z}^{k+1}-z}^2\right).
	\end{aligned}
	\end{equation*}
\end{small}Using the above three equality into \eqref{GADMMeq6}, we have 
\begin{small}
		\begin{equation*}
	\begin{aligned}
	&\quad\; \frac{1}{\Gamma_k} Q(x^*,y^*,z;w^{k+1}_{ag})-\frac{1-\theta_k}{\Gamma_k} Q(x^*,y^*,z; w^k_{a g}) \\
	&\leq   \frac{\theta_k}{\Gamma_k}\left\{\frac{\eta_k}{2(2-\alpha)}\left(\left\|\hat{x}^k-x^*\right\|^2-\norm{\hat{x}^{k+1}-x^*}^2\right)
	+\frac{\lambda_k}{2(2-\beta)}\left(\norm{B \hat{y}^k-B y^*}^2-\norm{B\hat{y}^{k+1}-By^*}^2\right)\right.\\
	&\quad
	\left.+\frac{1}{2\gamma_k\kappa}\left(\norm{\hat{z}^k-z}^2-\norm{\hat{z}^{k+1}-z}^2\right)
	+\frac{\tau_k-\lambda_k}{2}\left\|A\left(x^{k+1}-x^*\right)\right\|^2 
	-\frac{\tau_k-\lambda_k}{2}\left\|B y^{k+1}-By^*\right\|^2\right\}.
	\end{aligned}
	\end{equation*}
\end{small}
	This completes the proof.
\end{proof}

Now we analysis the convergence rate of GLADMM.

\begin{theorem}\label{GADMMpa}
	In GLADMM, if the total number of iterations is set to $N$ and the parameters
	\begin{small}
			\begin{equation}\label{GADMMpara}
		\theta_k=\frac{2}{k+1}, \Gamma_k=\frac{2}{k(k+1)}, \lambda_k=\tau_k=\frac{\gamma N}{k}, \gamma_k=\frac{(2-\xi)\gamma k}{\kappa N}, \eta_k=\frac{2 L}{\alpha k},
		\xi_k=\xi\in [1.5,2).
		\end{equation}
	\end{small}
	Then, we have 
\begin{footnotesize}
		\begin{align}
	&\left|F(x^N_{ag},y^N_{ag})-F(x^*,y^*)\right|
	\leq  \frac{2L\norm{x^1-x^*}^2}{\alpha(2-\alpha)N(N-1)}+\frac{\kappa\max \left\{\left(1+\left\|z^*\right\|\right)^2, 4\left\|z^*\right\|^2\right\}}{\gamma(2-\xi)(N-1)}
	+\frac{2\gamma \norm{B y^1-B y^*}^2}{(2-\beta)(N-1)},\notag\\
	&\norm{B y^N_{ag}-A x^N_{ag}-b}\leq 
	\frac{2L\norm{x^1-x^*}^2}{\alpha(2-\alpha)N(N-1)}+\frac{\kappa\max \left\{\left(1+\left\|z^*\right\|\right)^2, 4\left\|z^*\right\|^2\right\}}{\gamma(2-\xi)(N-1)}
	+\frac{2\gamma \norm{B y^1-B y^*}^2}{(2-\beta)(N-1)}.\label{eq:feasibility}
	\end{align}
\end{footnotesize}
\end{theorem}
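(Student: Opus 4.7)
The plan is to specialize Lemma \ref{l:GLADMM} to the parameter choice \eqref{GADMMpara}, telescope in $k$, and then invoke Lemmas \ref{rate1}-\ref{rate2} to extract the objective-value and feasibility bounds from the resulting gap-function estimate.

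First I would verify that \eqref{GADMMpara} satisfies the three hypotheses of Lemma \ref{l:GLADMM}. Direct computation yields $L\theta_k/\eta_k=\alpha k/(k+1)\le\alpha$; choosing $\beta\in[1/\xi,1)$ (possible since $\xi\ge 3/2$ forces $1/\xi\le 2/3<1$) ensures $1/\xi_k\le\beta$; and because $\tau_k=\lambda_k=\gamma N/k$ and $\gamma_k=(2-\xi)\gamma k/(\kappa N)$,
\[
\frac{\tau_k+(1-\xi_k)\lambda_k}{\gamma_k}=\frac{(2-\xi)\gamma N/k}{(2-\xi)\gamma k/(\kappa N)}=\kappa (N/k)^2\ge\kappa\quad\text{for }1\le k\le N.
\]
Note also $\theta_1=1$, matching the convention on $\Gamma_k$, and the identity $\tau_k=\lambda_k$ kills the two non-telescoping residuals $\norm{A(x^{k+1}-x^*)}^2$ and $\norm{By^{k+1}-By^*}^2$ on the RHS of \eqref{GADMMres}.

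Second, $\theta_k/\Gamma_k=k$ exactly cancels the $k$-dependence of $\eta_k$, $\lambda_k$, and $1/\gamma_k$, turning the weights of the three telescoping differences $\norm{\hat x^k-x^*}^2-\norm{\hat x^{k+1}-x^*}^2$, $\norm{B\hat y^k-By^*}^2-\norm{B\hat y^{k+1}-By^*}^2$, and $\norm{\hat z^k-z}^2-\norm{\hat z^{k+1}-z}^2$ into constants independent of $k$. Summing \eqref{GADMMres} from $k=1$ to $N-1$ then telescopes cleanly: the term $(1-\theta_1)Q(\cdot;w^1_{ag})/\Gamma_1$ vanishes because $\theta_1=1$, and the RHS collapses to the $k=1$ boundary values $\hat x^1=x^1$, $\hat y^1=y^1$, $\hat z^1=0$. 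Multiplying both sides by $\Gamma_{N-1}=2/(N(N-1))$ yields a bound of the form
\[
Q(x^*,y^*,z;w^N_{ag})\le\frac{c_1\norm{x^1-x^*}^2}{N(N-1)}+\frac{c_2\norm{By^1-By^*}^2}{N-1}+\frac{c_3\norm{z}^2}{N-1},
\]
with explicit constants $c_1,c_2,c_3$ depending on $L,\alpha,\beta,\xi,\gamma,\kappa$, exhibiting the $O(1/N^2)+O(1/N)+O(1/N)$ structure advertised in Table \ref{Table 0}.

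Third, since $By^*-Ax^*-b=0$, the definition \eqref{GADMMGap} reduces $Q(x^*,y^*,z;w^N_{ag})$ to $F(x^N_{ag},y^N_{ag})-F(x^*,y^*)-\inp{z,By^N_{ag}-Ax^N_{ag}-b}$, matching the hypothesis of Lemma \ref{rate1}. Taking the supremum over $\norm{z}\le\rho$ with $\rho=\max\{1+\norm{z^*},2\norm{z^*}\}$ replaces $\norm{z}^2$ by $\rho^2=\max\{(1+\norm{z^*})^2,4\norm{z^*}^2\}$, and Lemma \ref{rate2} (using $\rho-\norm{z^*}\ge 1$ and $\norm{z^*}/(\rho-\norm{z^*})\le 1$) then separates the inequality into the two-sided bound on $|F(x^N_{ag},y^N_{ag})-F(x^*,y^*)|$ and the feasibility bound in \eqref{eq:feasibility}. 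The main obstacle is the careful algebraic bookkeeping in the second step: verifying that the substitution makes the coefficient of each telescoping term $k$-independent and that $\tau_k=\lambda_k$ genuinely eliminates the quadratic residuals $\norm{A(x^{k+1}-x^*)}^2$ and $\norm{By^{k+1}-By^*}^2$; once this is done, the remainder is a mechanical application of the previously established machinery.
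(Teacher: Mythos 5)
Your proposal is correct and follows essentially the same route as the paper's proof: verify the three hypotheses of Lemma \ref{l:GLADMM} under the parameter choice \eqref{GADMMpara}, observe that $\theta_k/\Gamma_k=k$ makes the telescoping weights constant while $\tau_k=\lambda_k$ annihilates the non-telescoping residuals, sum to $k=N-1$, multiply by $\Gamma_{N-1}$, and conclude via Lemmas \ref{rate1}--\ref{rate2} with $\rho=\max\{1+\|z^*\|,2\|z^*\|\}$. The only difference is that you make explicit some bookkeeping (e.g.\ the cancellation of the $\|A(x^{k+1}-x^*)\|^2$ and $\|By^{k+1}-By^*\|^2$ terms) that the paper leaves implicit.
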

\begin{proof}
	Applying the inequality \eqref{GADMMres} inductively and $\frac{1-\theta_i}{\Gamma_i}=\frac{1}{\Gamma_{i-1}}$, we conclude that 
	\begin{small}
			\begin{equation*}
		\begin{aligned}
		&\quad\;\frac{1}{\Gamma_k} Q(x^*,y^*,z;w^{k+1}_{ag})\\
		&\leq \sum_{i=1}^{k} \frac{\theta_i}{\Gamma_i}\frac{\eta_i}{2(2-\alpha)}\left(\left\|\hat{x}^i-x^*\right\|^2-\norm{\hat{x}^{i+1}-x^*}^2\right)
		+\sum_{i=1}^{k}\frac{\theta_i}{\Gamma_i}\frac{1}{2\gamma_i\kappa}\left(\norm{\hat{z}^i-z}^2-\norm{\hat{z}^{i+1}-z}^2\right)\\
		&\quad + \sum_{i=1}^{k}\frac{\theta_i}{\Gamma_i}\frac{\lambda_k}{2(2-\beta)}\left(\norm{B \hat{y}^i-B y^*}^2-\norm{B\hat{y}^{i+1}-By^*}^2\right)
		-\sum_{i=1}^{k}\frac{\theta_i}{\Gamma_i}\frac{\tau_i-\lambda_i}{2}\left\|B y^{i+1}-B y^*\right\|^2\\
		&\quad +\sum_{i=1}^{k}\frac{\theta_i}{\Gamma_i}\frac{\tau_i-\lambda_i}{2}\left\|A\left(x_{i+1}-x^*\right)\right\|^2. 
		\end{aligned}
		\end{equation*}
	\end{small}Observing from \eqref{GADMMpara} that we can calculate that 
	\begin{small}
		\[
		\frac{L\theta_k}{\eta_k}\leq \alpha \in (0,1),\quad\frac{1}{\xi_k}=\frac{1}{\xi}\leq \beta\in(0,1),\quad \frac{\tau_k+(1-\xi_k)\lambda_k}{\gamma_k}\geq \kappa >1,\quad\forall k \leq N,
		\]
	\end{small}
\begin{small}
		\begin{equation*}
	\begin{aligned}
	&\begin{aligned}
	&\sum_{i=1}^{k} \frac{\theta_i}{\Gamma_i}\frac{\eta_i}{2(2-\alpha)}\left(\left\|\hat{x}^i-x^*\right\|^2-\norm{\hat{x}^{i+1}-x^*}^2\right)
	= \frac{L}{\alpha(2-\alpha)}\left(\left\|\hat{x}^1-x^*\right\|^2-\norm{\hat{x}^{k+1}-x^*}^2\right),
	\end{aligned}\\
	&	\begin{aligned}
	&\sum_{i=1}^{k}\frac{\theta_i}{\Gamma_i}\frac{1}{2\gamma_i\kappa}\left(\norm{\hat{z}^i-z}^2-\norm{\hat{z}^{i+1}-z}^2\right)
	=\frac{\kappa N}{2\gamma(2-\xi)}\left(\norm{\hat{z}^1-z}^2-\norm{\hat{z}^{k+1}-z}^2\right),
	\end{aligned}\\
	&	\begin{aligned}
	&\sum_{i=1}^{k}\frac{\theta_i}{\Gamma_i}\frac{\lambda_i}{2(2-\beta)}\left(\norm{B \hat{y}^i-B y^*}^2-\norm{B\hat{y}^{i+1}-By^*}^2\right)
	=\frac{\gamma N}{2-\beta}\left(\norm{B \hat{y}^1-B y^*}^2-\norm{B\hat{y}^{k+1}-By^*}^2\right).
	\end{aligned}
	\end{aligned}
	\end{equation*}	
\end{small}Applying the above three calculations to \eqref{GADMMres} and multiplying both sides of above inequality by $\Gamma_k$, we have
\begin{small}
		\begin{equation*}
	\begin{aligned}
	Q(x^*,y^*,z;w^{k+1}_{ag})\leq \frac{2L\norm{\hat{x}^1-x^*}^2}{\alpha(2-\alpha)k(k+1)}+\frac{\kappa N\norm{\hat{z}^1-z}^2}{\gamma(2-\xi)k(k+1)}
	+\frac{2\gamma N\norm{B \hat{y}^1-B y^*}^2}{(2-\beta)k(k+1)}.
	\end{aligned}
	\end{equation*}
\end{small}Letting $k=N-1$, noting $\hat{x}^1=x^1$, $\hat{y}^1=y^1$, $\hat{z}^1=0$ and applying Lemmas \ref{rate1} and \ref{rate2} with $\rho=\max \left\{1+\left\|z^*\right\|, 2\left\|z^*\right\|\right\}$, we obtain
\begin{small}
		\begin{equation*}
	\begin{aligned}
	&\left|F(x^N_{ag},y^N_{ag})-F(x^*,y^*)\right|
	\leq  \frac{2L\norm{x^1-x^*}^2}{\alpha(2-\alpha)N(N-1)}+\frac{\kappa\max \left\{\left(1+\left\|z^*\right\|\right)^2, 4\left\|z^*\right\|^2\right\}}{\gamma(2-\xi)(N-1)}
	+\frac{2\gamma \norm{B y^1-B y^*}^2}{(2-\beta)(N-1)},\\
	&\norm{B y^N_{ag}-A x^N_{ag}-b}\leq 
	\frac{2L\norm{x^1-x^*}^2}{\alpha(2-\alpha)N(N-1)}+\frac{\kappa\max \left\{\left(1+\left\|z^*\right\|\right)^2, 4\left\|z^*\right\|^2\right\}}{\gamma(2-\xi)(N-1)}
	+\frac{2\gamma \norm{B y^1-B y^*}^2}{(2-\beta)(N-1)}.
	\end{aligned}
	\end{equation*}
\end{small}This completes the proof.
\end{proof}
\begin{remark}\label{rem:4}
	Similar to GPGM and GLADMM proposed in Section 2 and 3, We use G\"uler-type acceleration technique to accelerate L-ADMM. Specifically,  we set $\alpha_k,\beta_k \in (0,1)$ and $\kappa_k\in (1,2)$ in formulation \eqref{GADMMeq6} from the proof of Lemma \ref{l:GLADMM}, which is an important lemma in convergence analysis of GLADMM. Then, we can utilize the negative terms $\bm{-\|x^k-\hat{x}^{k-1}\|^2}$, $\bm{-\|y^k-\hat{y}^{k-1}\|^2}$, and $\bm{-\|z^k-\hat{z}^{k-1}\|^2}$ to design the extrapolation steps \eqref{GADMMhatx}, \eqref{GADMMhaty} and \eqref{GADMMhatz}. The proposed GLADMM can accelerate both the primal and dual variables and achieve a better convergence rate as in AL-ADMM\cite{ouyang2015accelerated}.
\end{remark}

\begin{remark}\label{rem:42}
	We compare  the convergence result \cite[Formulation (2.46)]{ouyang2015accelerated} of AL-ADMM in \cite[Theorem 2.9]{ouyang2015accelerated} with  \eqref{eq:feasibility} in Theorem \ref{GADMMpa}. Although the total convergence rate of both AL-ADMM  and GLADMM in \eqref{eq:feasibility} is $O(\frac{1}{N})$,  the first term of \cite[Formulation (2.46)]{ouyang2015accelerated} has a convergence rate of $O(\frac{1}{N^{3/2}})$, while the first term of \eqref{eq:feasibility} has a convergence  rate of $O(\frac{1}{N^2})$. This indicates that the proposed GLADMM achieves a better convergence rate compared to the AL-ADMM in \cite{ouyang2015accelerated}, see Table \ref{Table 0}.
\end{remark}

\section{Numerical experiment}
In this section, we present three numerical experiments to demonstrate the effectiveness of G\"uler-type acceleration technique for GPG, GLALM and GLADMM. We mainly consider the following three different types of problems: $\ell_1$ regularized logistic regression problem, quadratic programming and compressive sensing. All numerical experiments are run in MATLAB R2016b on a PC with an Intel(R) Xeon(R) Silver 4210R CPU @2.40GHz and 64 GB of RAM under the Windows 10 operating system.

The aim of the numerical experiments is to demonstrate how G\"uler-type acceleration techniques can be applied to gradient-based algorithms, using PGM, LALM, and L-ADMM as examples, to improve their performance. Specifically, we compare Nesterov's second APGM \cite{tseng2010approximation}, ALALM  \cite{xu2017accelerated}, and AL-ADMM  \cite{ouyang2015accelerated} based on Nesterov's extrapolation techniques, with the performance of GPGM, GLALM, and GLADMM algorithms obtained by incorporating G\"uler-type acceleration techniques. The G\"uler-type acceleration techniques can be further extended to gradient-based acceleration algorithms that utilize Nesterov's extrapolation techniques.

\subsection{$\ell_1$ regularized logistic regression}
In this subsection, we focus on the $\ell_1$ regularized logistic regression problem introduced by \cite{wen2017linear}:
\begin{equation}\label{lg}
\min _{\tilde{x} \in \mathbb{R}^n, x_0 \in \mathbb{R}} F(x)=\sum_{i=1}^m \log \left(1+\exp \left(-b_i\left(a_i^{\top} \tilde{x}+x_0\right)\right)\right)+\lambda\|\tilde{x}\|_1,
\end{equation}
where $a_i \in \mathbb{R}^n$, $b_i \in\{-1,1\}$, $i=1,2, \ldots, m$, with $m<n$, and regularization parameter $\lambda>0$. In our algorithms below we take $L=0.25 \lambda_{\max }\left(D^{\top} D\right)$, and $D$ is the matrix whose $i$th row is given by $(a_i^\top 1)$.
The setting of the problem is the same as in \cite[Section 4.1]{wen2017linear}, and the termination criterions are 
\begin{equation*}
\max \left\{\frac{\left|f\left(x^k\right)+g\left(x^k\right)-G\left(u^k\right)\right|}{\max \left\{f\left(x^k\right)+g\left(x^k\right), 1\right\}}, \frac{50\left|e^T u^k\right|}{\max \left\{\left\|u^k\right\|, 1\right\}}\right\} \leq 10^{-8}
\end{equation*}
or if the maximum number of iterations reaches 2000, where the above formulation is  dual feasibility violation (see \cite[Section 4.1]{wen2017linear} for details). 

In this experiment, we generated three instances of the $\ell_1$ regularized logistic regression problems with different parameters $(m,n,s)=(300,3000,30)$, $(500,5000,50)$, and $(800,8000,80)$. The problem generation follows the guidelines outlined in \cite[Section 4.1]{wen2017linear}. 
For each problem instance, we apply Nesterov's accelerated projected gradient (with $\alpha=1$ in GPGM) and GPGM (with $\alpha=0.8$) to solve \eqref{lg}. 
Note that we first solve the problem \eqref{lg} to high accuracy by calling MATLAB's intrinsic functions, and use the resulting high-precision optimal value and solution as \(F^*\) and \(x^*\).

\begin{figure}[htbp]
	\centering
	\subfigure[]{
		\includegraphics[scale=0.3]{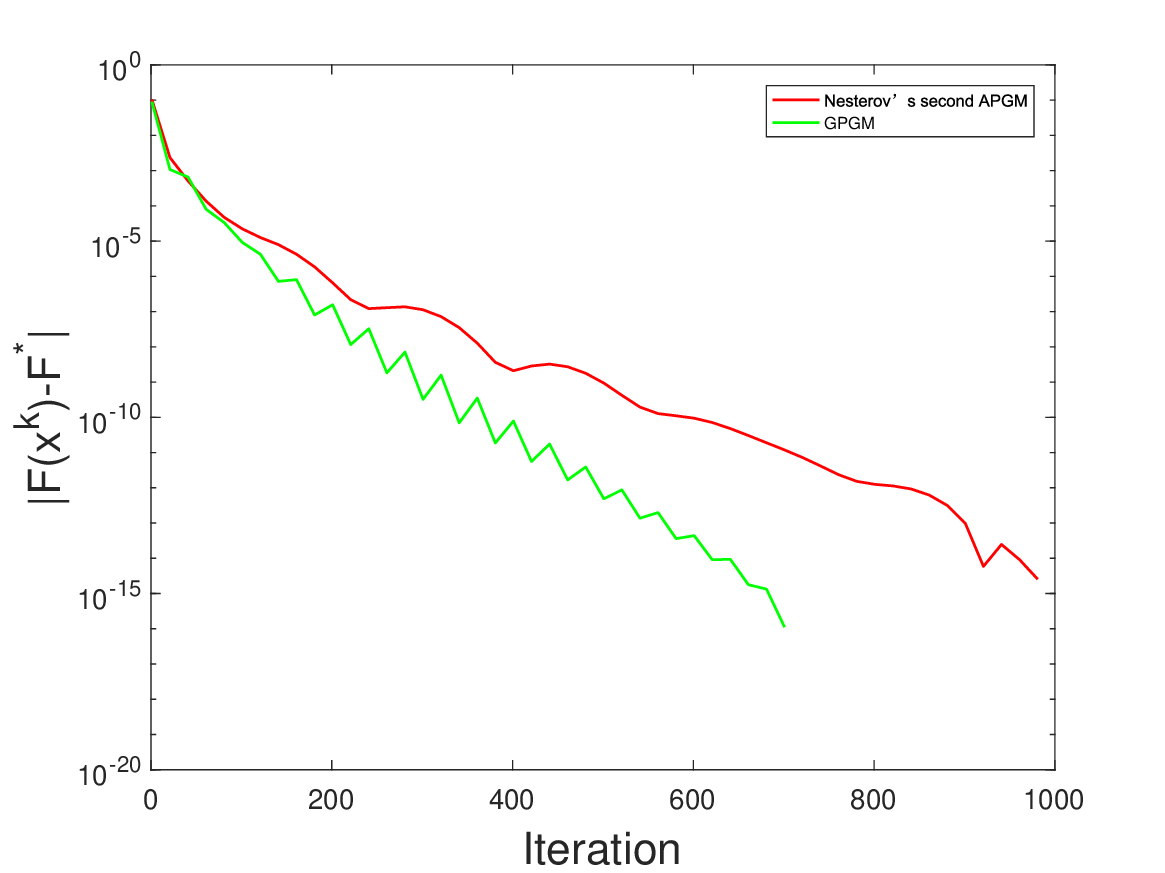}
	}
	\subfigure[]{
		\includegraphics[scale=0.3]{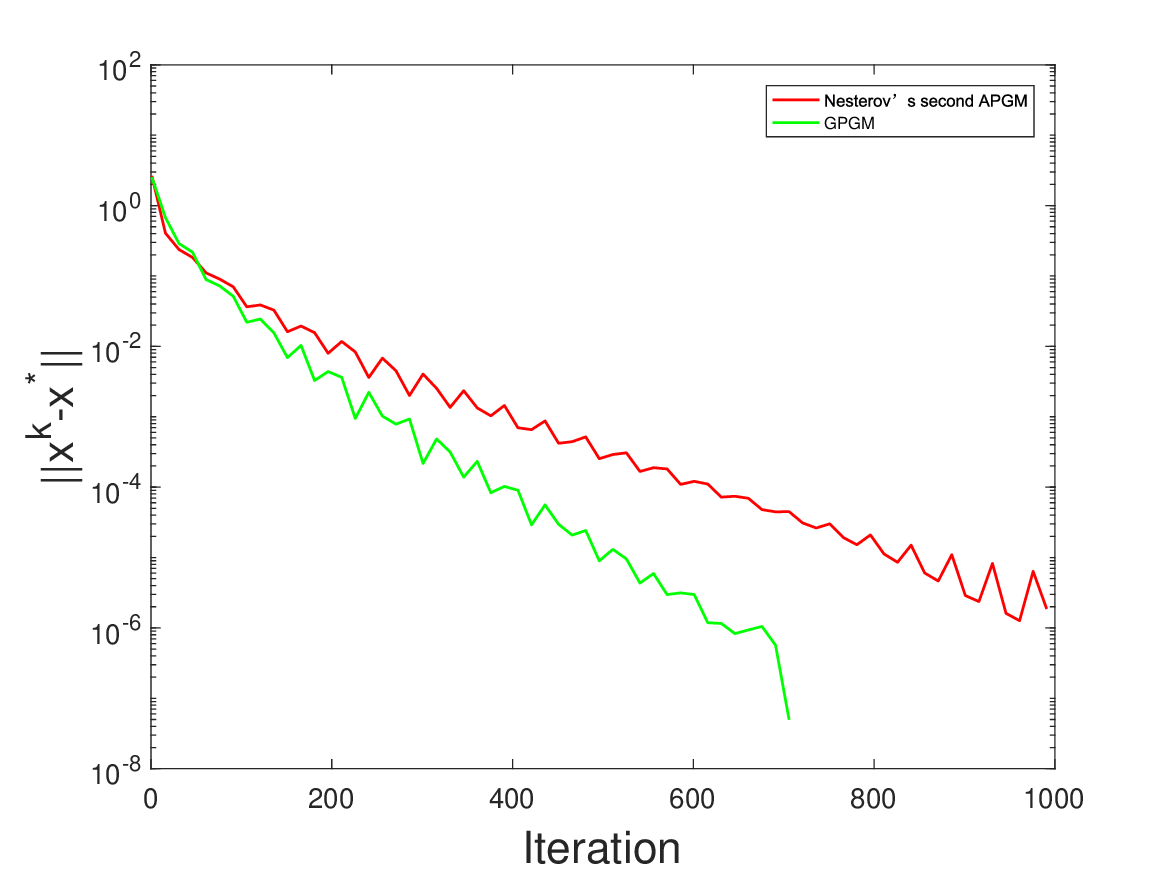}
	}
	\caption{$\ell_1-logistic$ $m=300, n=3000, s=30$}
	\label{Fig1}
		\subfigure[]{
		\includegraphics[scale=0.3]{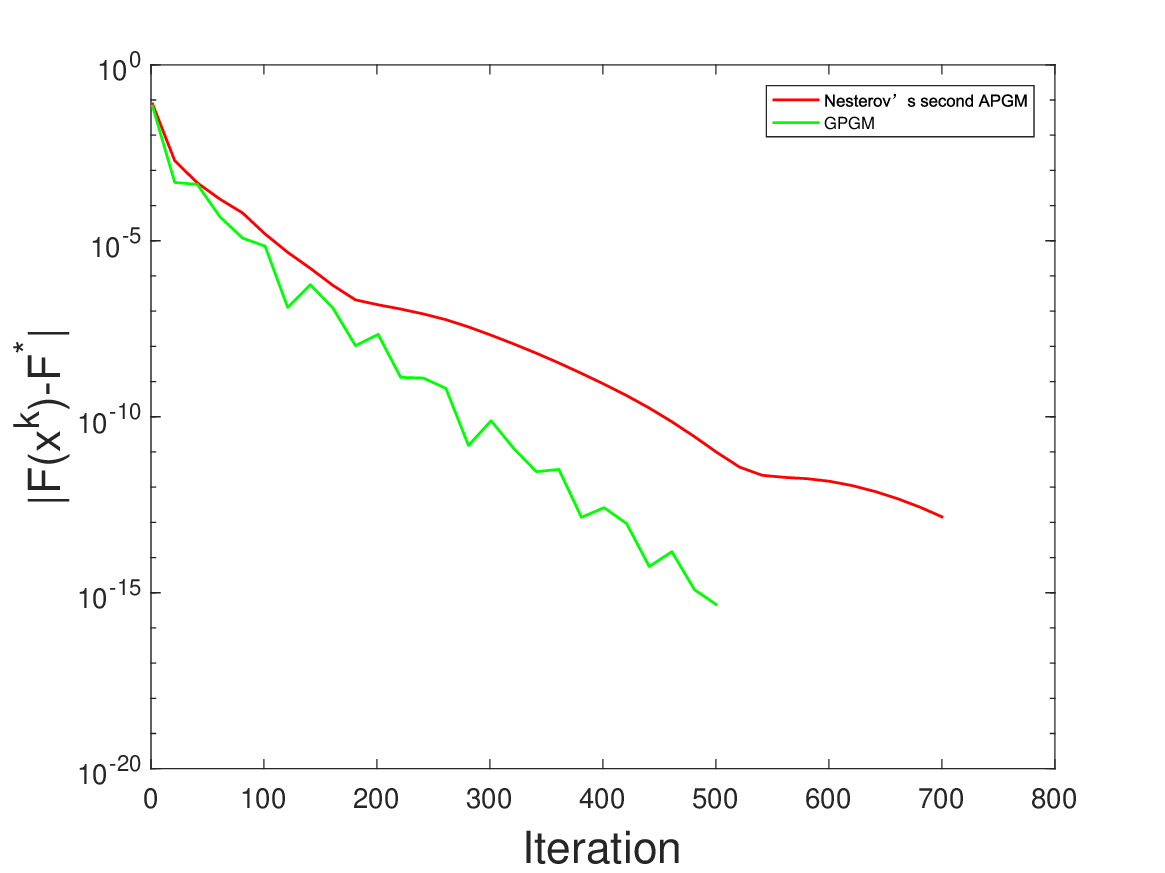}
	}
	\subfigure[]{
		\includegraphics[scale=0.3]{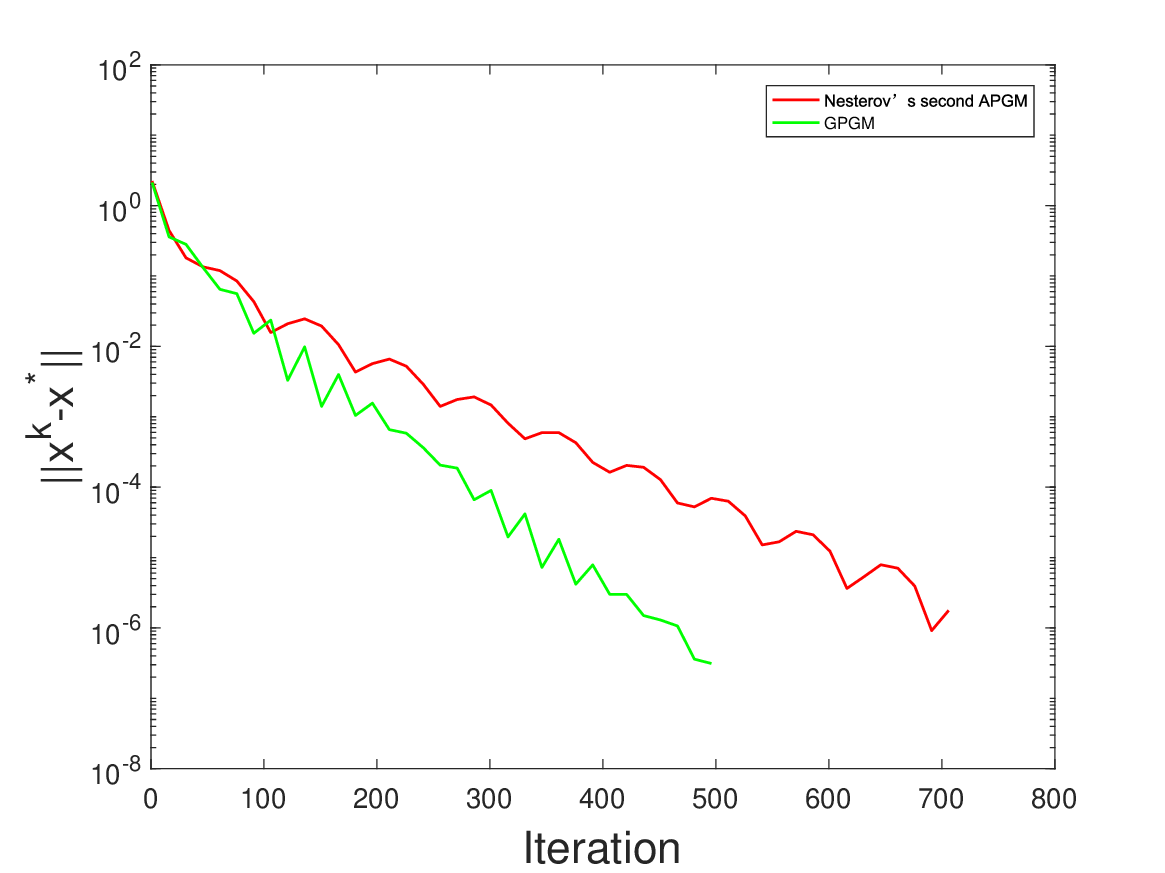}
	}
	\caption{$\ell_1-logistic$ $m=500, n=5000, s=50$}
	\label{Fig2}
	\subfigure[]{
		\includegraphics[scale=0.3]{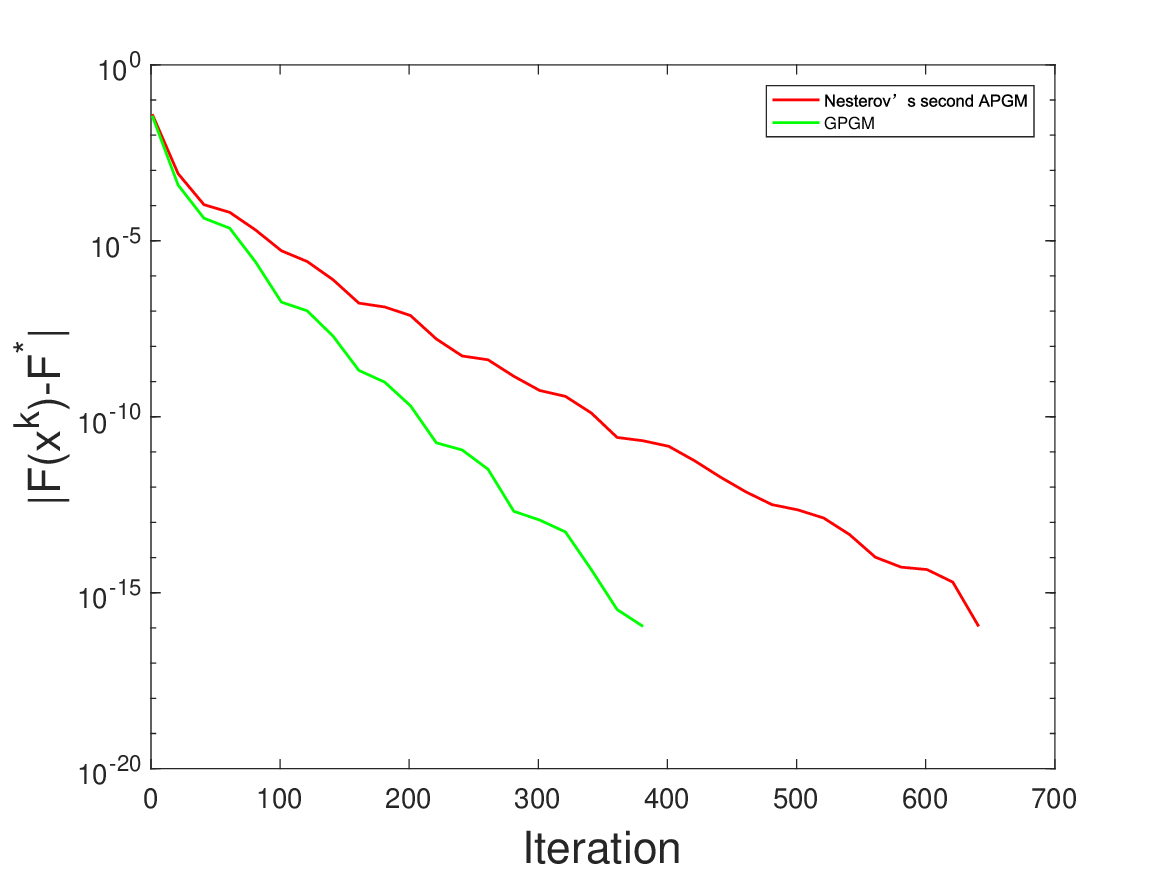}
	}
	\subfigure[]{
		\includegraphics[scale=0.3]{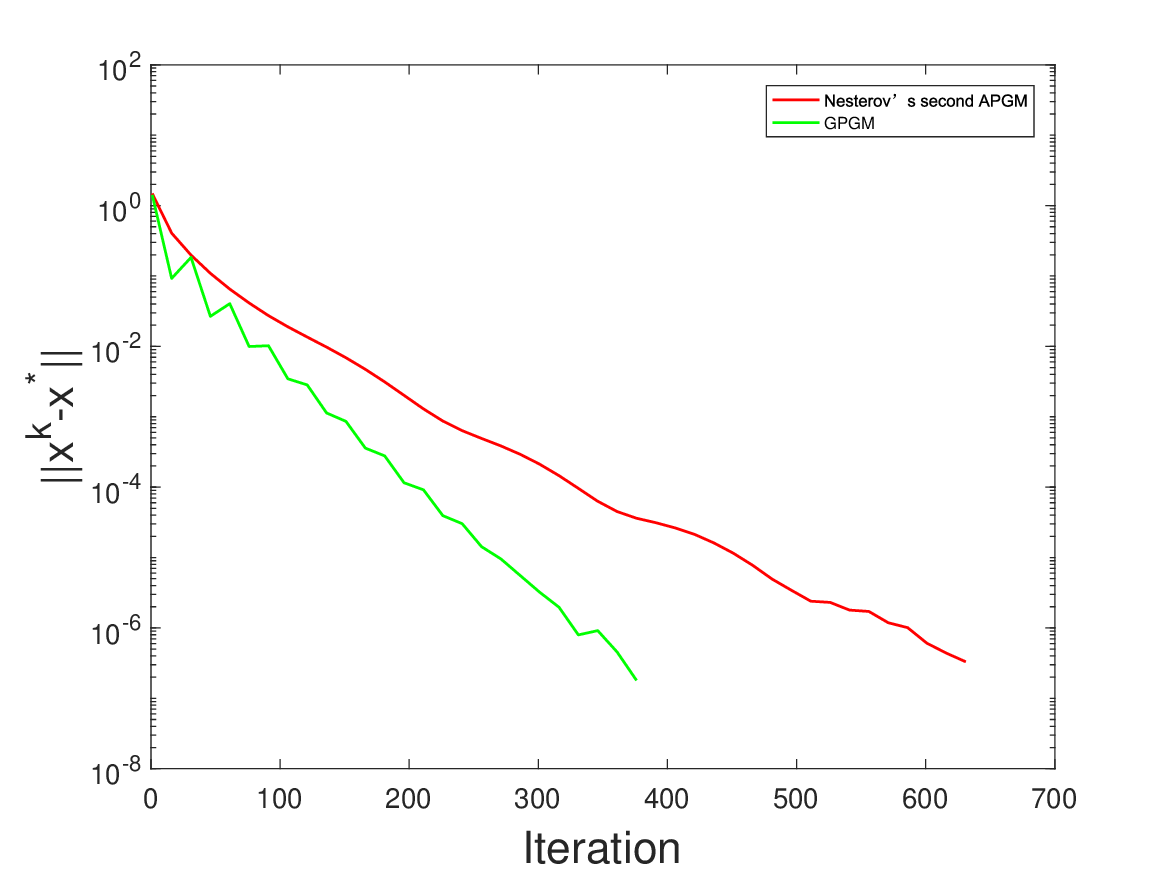}
	}
	\caption{$\ell_1-logistic$ $m=800, n=8000, s=80$}
	\label{Fig3}
\end{figure}


The results are presented in Figures \ref{Fig1}-\ref{Fig3}. In part (a) of each figure, the plot shows the relationship between $F(x^k)-F^*$ and the number of iterations. In part (b) of each figure, the plot illustrates the relationship between $\norm{x^k-x^*}$ and the number of iterations. 
These figures  demonstrate that GPGM outperforms Nesterov's accelerated projected gradient. 

\subsection{Quadratic programming}

In this subsection, we consider the following nonnegative linearly constrained quadratic programming:
\begin{equation}\label{eq:qp}
\min _x F(x)=\frac{1}{2} x^{\top} Q x+c^{\top} x \text {, s.t. } A x=b, x\geq 0, 
\end{equation}
where $Q\in \bbr^{n\times n}$ is a positive semidefinite matrix, $A\in \bbr^{m\times n}$, $c\in \bbr^n$, and $b\in \bbr^m$. Our objective is to demonstrate the effectiveness of our algorithm. In the algorithms below, we take $L=\lambda_{\max }\left(Q^{\top} Q\right)$
and terminate the algorithms when the maximum number of iterations reaches 1000.

We now conduct numerical experiments to compare our Algorithm \ref{GALALM} with the ALALM with adaptive parameters \cite[Algorithm 1]{xu2017accelerated}. We generated three instances of the quadratic programming problem \eqref{eq:qp} for different values of $(m,n)=(80,1000)$, $(100,1000)$, and $(120,1500)$. The matrices $Q\in \bbr^{n\times n}$, $A\in \bbr^{m\times n}$, and the vectors $c\in\bbr^n$ and $b\in\bbr^m$ were randomly generated with independent and identically distributed (i.i.d.) standard Gaussian entries. For our GLALM algorithm, we choose the parameters as specified in Theorem \ref{ThGAL} with $\alpha=0.5$, $\kappa=1.5$, $\gamma=15m$, and $\eta=2|Q|_2$. Similar to problem \eqref{lg}, the high-precision optimal value and solution as \(F^*\) and \(x^*\) through MATLAB's built-in functions. 

Figures \ref{Fig24}-\ref{Fig26}
show the objective distance to the optimal value $\left|F(x)-F\left(x^*\right)\right|$ and the feasibility violation $\|A x-b\|$ for both GLALM and ALALM. It is evident that GLALM outperforms ALALM significantly in terms of both objective and feasibility measures. 
This further confirms the effectiveness of leveraging negative terms to accelerate the algorithm.

\begin{figure}[htbp]
	\centering
	\subfigure[]{
		\includegraphics[scale=0.3]{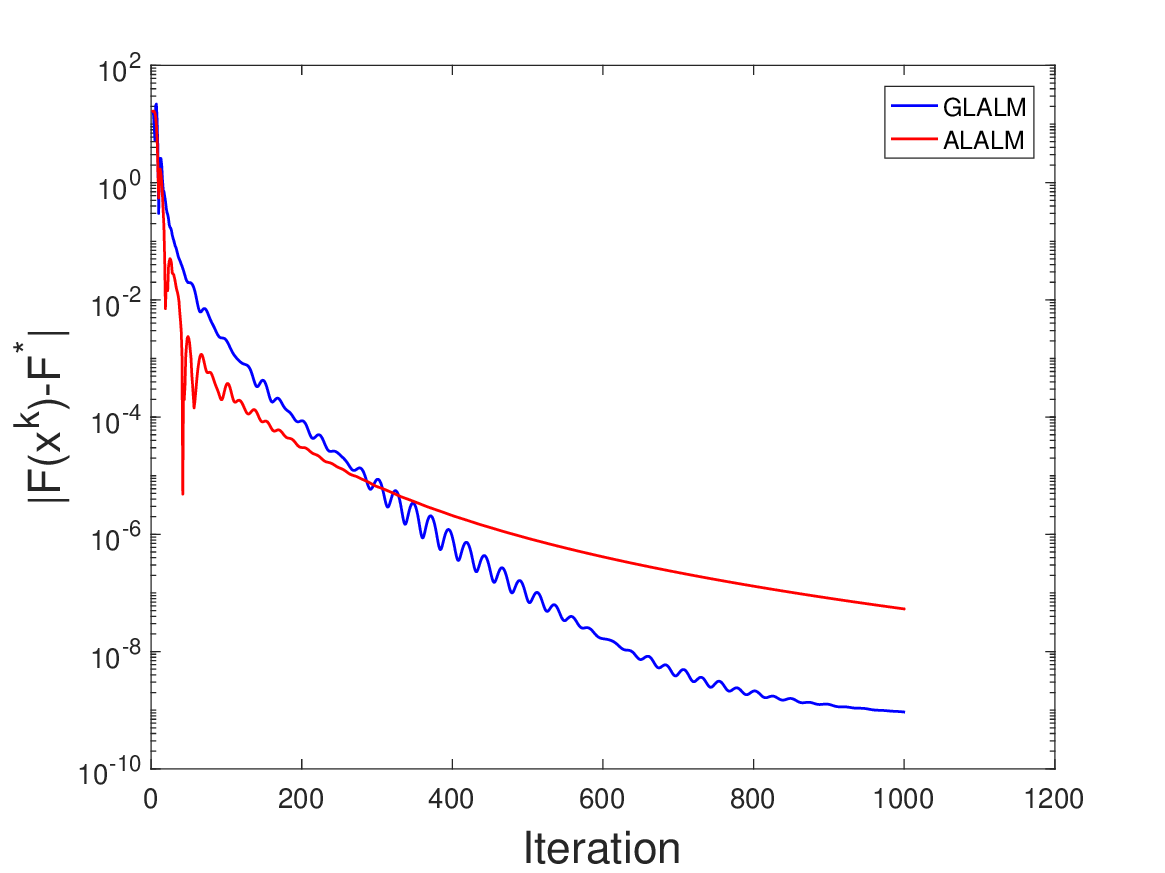}
	}
	\subfigure[]{
		\includegraphics[scale=0.3]{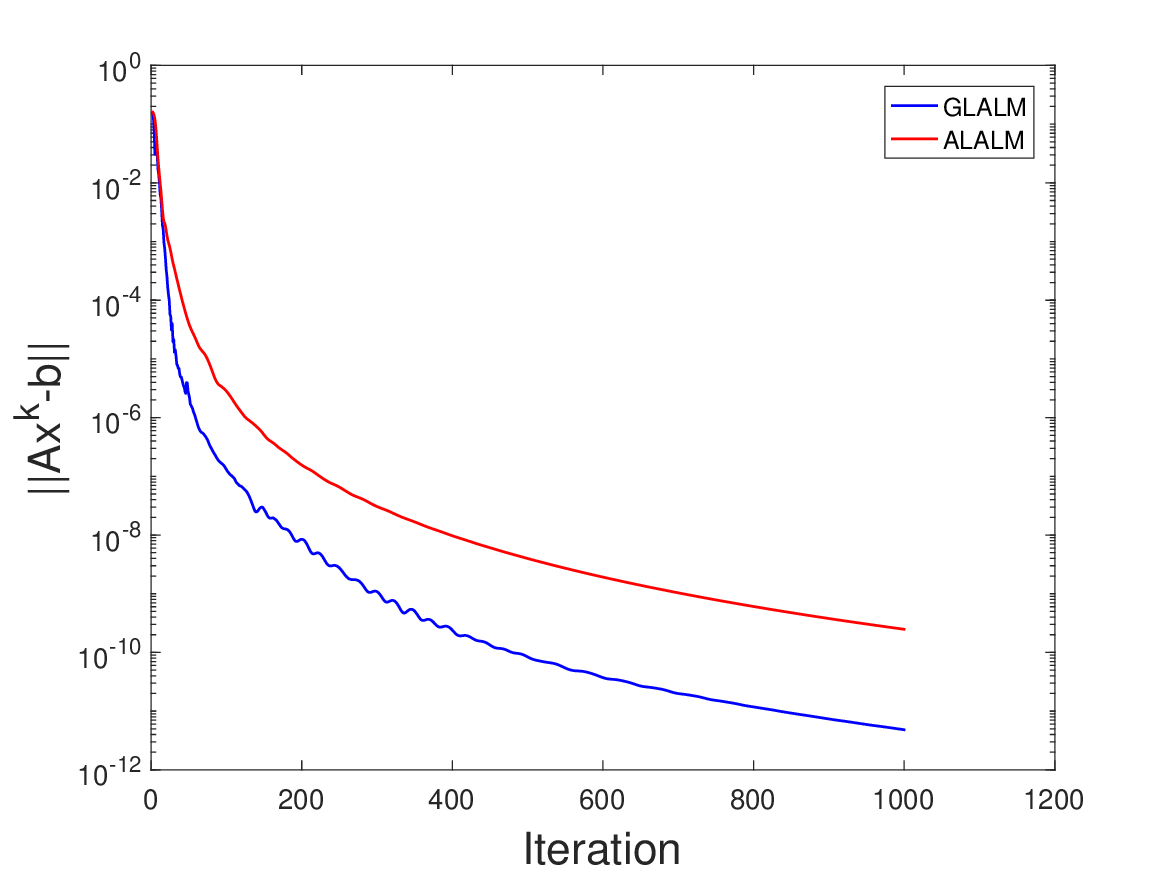}
	}
	\caption{quadratic programming $m=80, n=1000$}
	\label{Fig24}
		\subfigure[]{
			\includegraphics[scale=0.3]{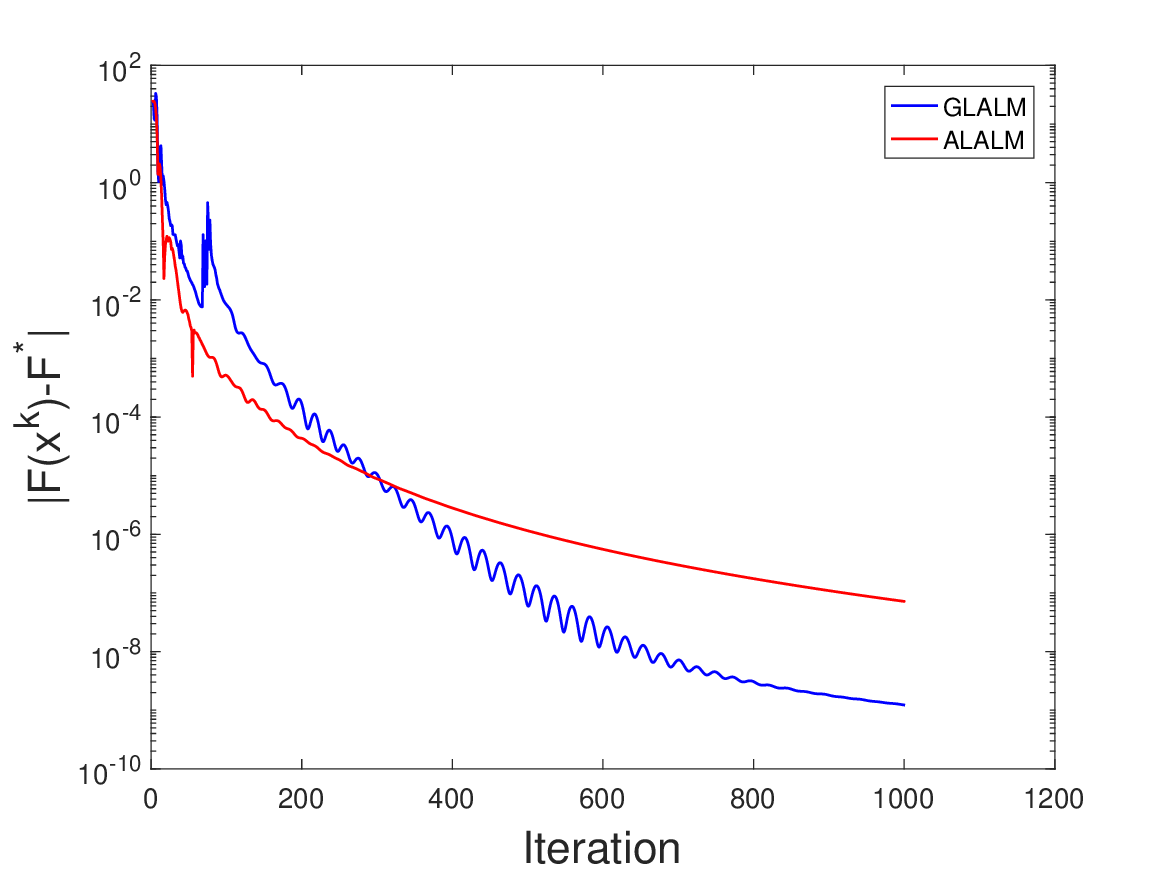}
		}
		\subfigure[]{
			\includegraphics[scale=0.3]{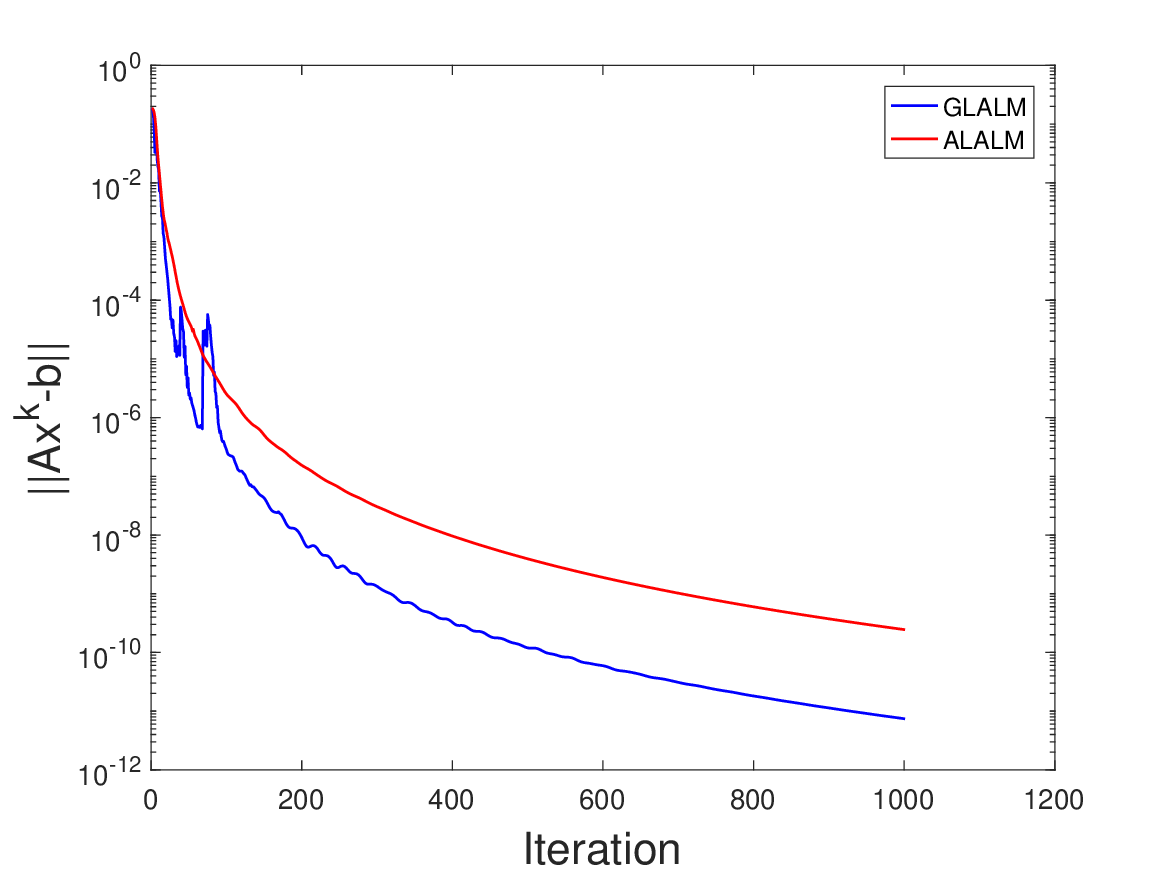}
		}
		\caption{quadratic programming $n=1000, m=100$}
		\label{Fig25}
		\subfigure[]{
			\includegraphics[scale=0.3]{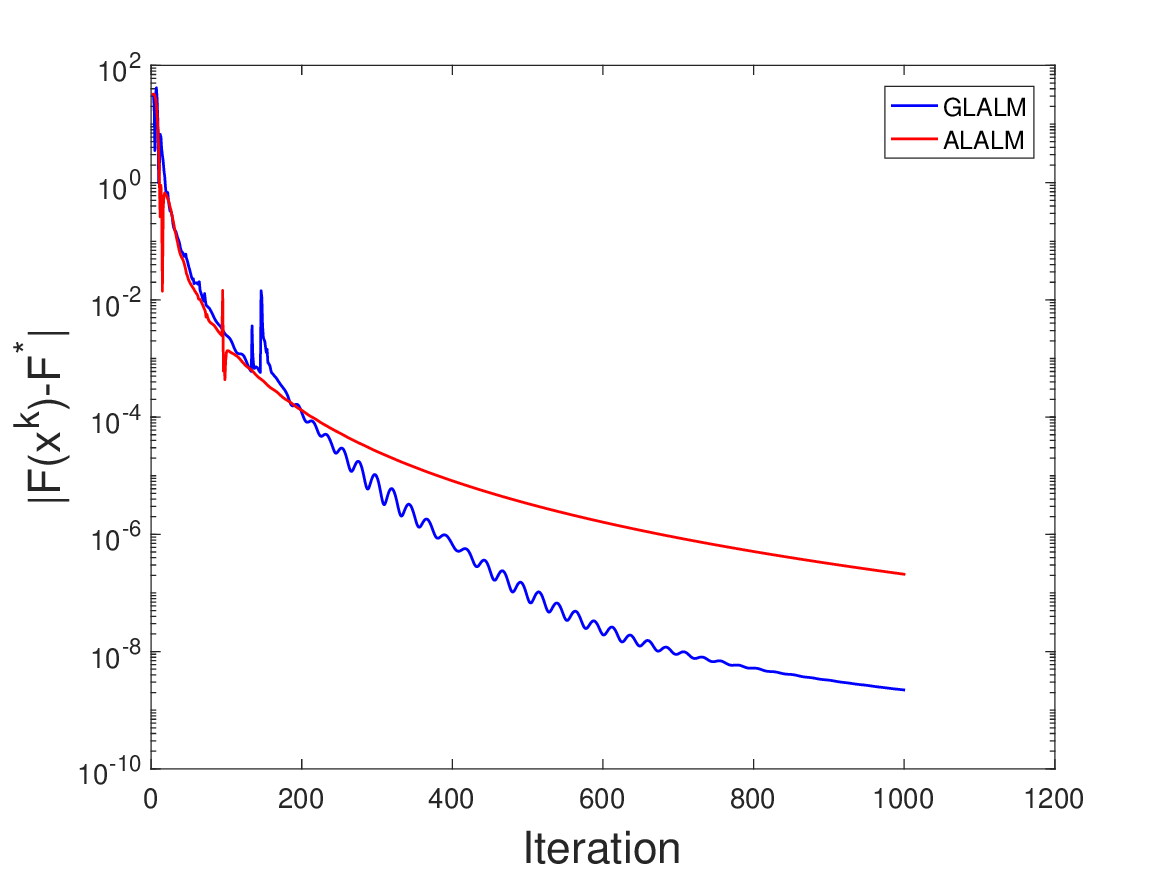}
		}
		\subfigure[]{
			\includegraphics[scale=0.3]{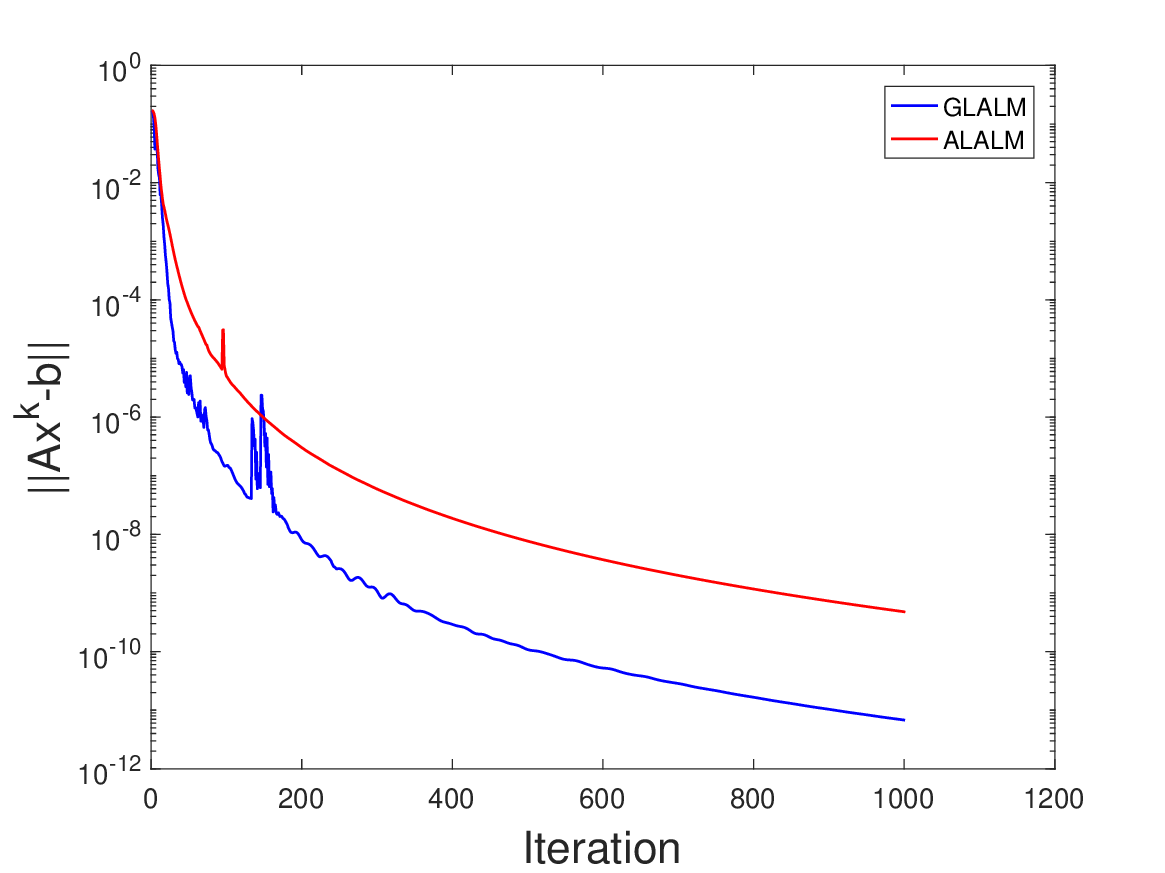}
		}
		\caption{quadratic programming $n=1500, m=120$}
		\label{Fig26}
\end{figure}


\subsection{Compressive sensing}
In this subsection, we present the experimental results comparing L-ADMM, AL-ADMM, and GLADMM for solving an image reconstruction problem. The problem is formulated as follows:\begin{equation}\label{CS}
\min _{x \in \bbr^n} \frac{1}{2}\|D x-b\|^2+\lambda\|A x\|_{2,1},
\end{equation}
where $x$ represents a two-dimensional image to be reconstructed, $\|A x\|_{2,1}$ denotes the discrete total variation (TV) seminorm, $D$ is the given acquisition matrix based on the data acquisition physics, and $b$ corresponds to the observed data. We can reformulate \eqref{CS} into the form of \eqref{AECCO} with the following optimization problem:
\begin{equation*}
\begin{aligned}
& \min _{x \in \bbr^n,y\in \bbr^m} \frac{1}{2}\|D x-b\|^2+\lambda\|y\|_{2,1},
&\text{s.t.}\; y-Ax=0.
\end{aligned}
\end{equation*}

In this experiment, we consider the instance where the acquisition matrix $D \in \bbr^{m \times n}$ is generated independently from a normal distribution $N(0,1 / \sqrt{m})$. This type of acquisition matrix is commonly used in compressive sensing (see, e.g., \cite{baraniuk2008simple}). For a given $D$, the measurements $b$ are generated as $b=D x_{\text {true }}+\varepsilon$, where $x_{\text {true }}$ is a 64 by 64 Shepp-Logan phantom \cite{shepp1974fourier} with intensities in the range $[0,1]$ (so $n=4096$) and $\varepsilon \equiv N\left(0,0.001 I_n\right)$. We choose $m=1229$ to achieve a compression ratio of approximately $30 \%$, and set $\lambda=10^{-3}$ in \eqref{CS}. We apply L-ADMM, AL-ADMM, and GLADMM to solve \eqref{CS}. For this case, we use the parameters in \cite[Theorem 2.9]{ouyang2015accelerated} and  Theorem \ref{GADMMpa} with $N=300$ for AL-ADMM and GLADMM, respectively. To ensure a fair comparison, we use the same constants $L=\lambda_{\max }\left(D^T D\right)$ and $\gamma=1 /|A|$ for all algorithms. We report both the primal objective function value and the relative reconstruction error defined as
$
r(\tilde{x}):=\frac{\left\|\tilde{x}-x_{\text {true }}\right\|}{\left\|x_{\text {true }}\right\|} .
$
Figure \ref{ADMMFig7} shows that GLADMM  outperforms L-ADMM and AL-ADMM in solving \eqref{CS}, which is consistent with our theoretical results in Theorem \ref{GADMMpa}.

\begin{figure}[htbp]
	\centering
	\subfigure[]{
		\includegraphics[scale=0.3]{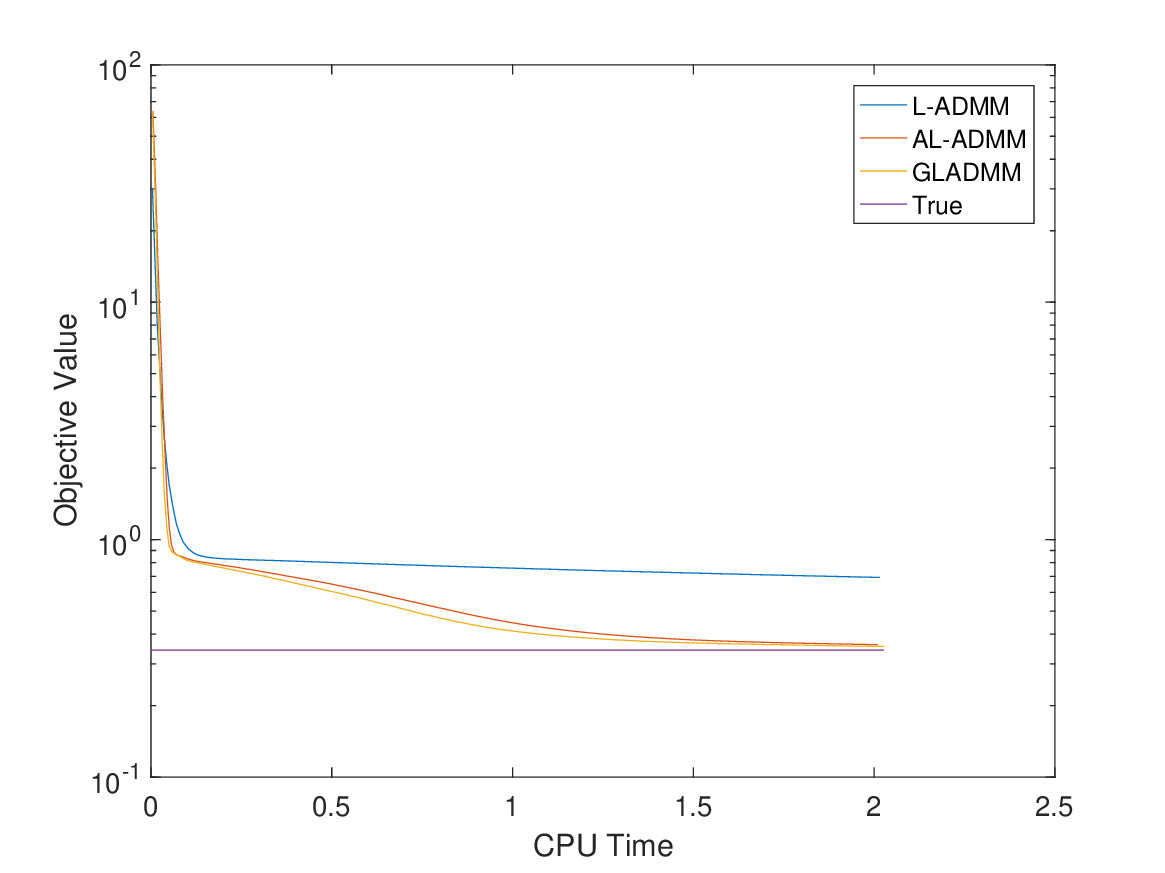}
	}
	\subfigure[]{
		\includegraphics[scale=0.3]{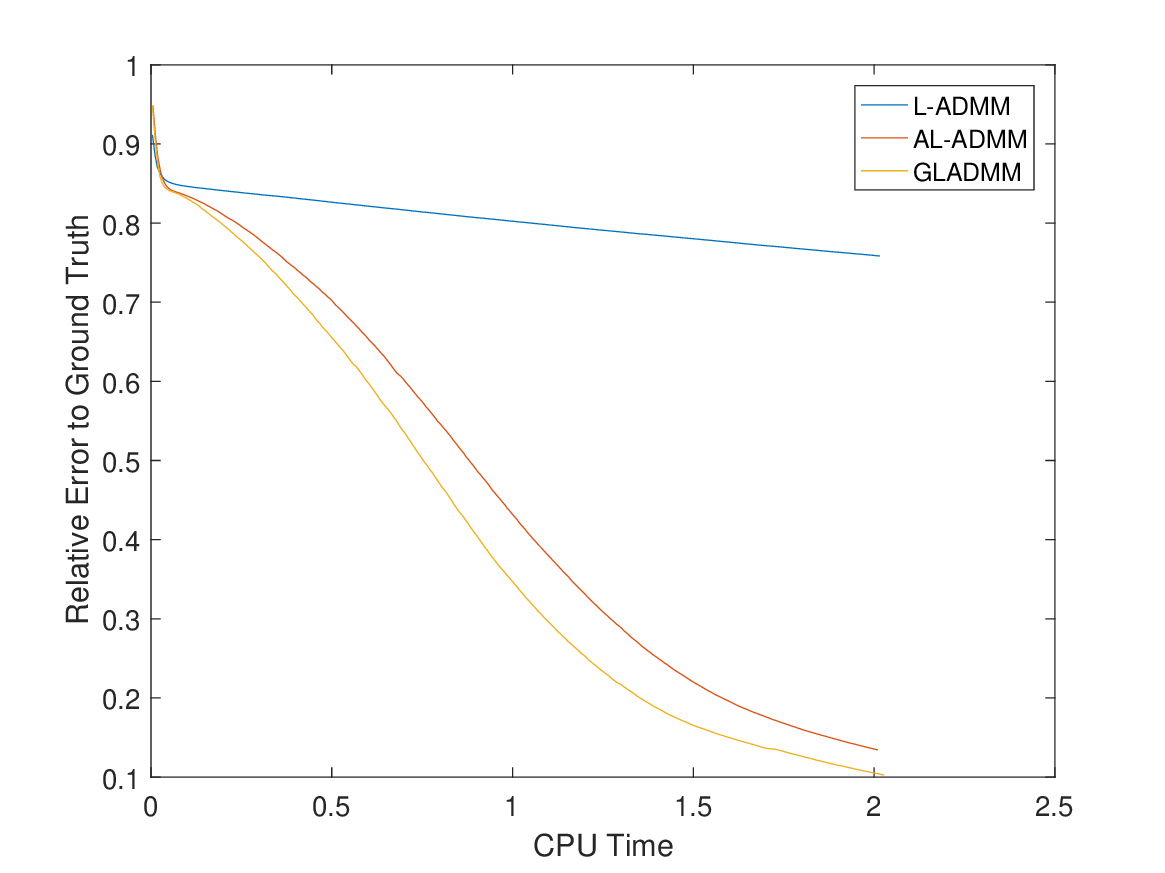}
	}
	\caption{Comparisons of 300 iterations of L-ADMM, AL-ADMM and GLADMM}
	\label{ADMMFig7}
\end{figure}

\section{Conclusion}
In this paper, we propose three acceleration algorithms, namely GPGM (Algorithm 1), GLALM (Algorithm 2), and GLADMM (Algorithm 3), to solve three structured non-smooth problems. These algorithms utilize G\"uler-type acceleration techniques, while GLALM and GLADMM also accelerate both primal and dual variables. The key idea, as described in Remarks \ref{rem:2}, \ref{rem:3} and \ref{rem:4}, is to leverage negative terms for achieving larger descent by appropriately configuring the parameters. Compared to the existing algorithms, GPGM and GLALM achieve the same convergence rate of $O(\frac{1}{k^2})$ but with a broader range of coefficient options. On the other hand, GLADMM maintains the same total convergence rate of $O(\frac{1}{N})$ as in  existing results, but with an improved partial convergence rate from $O(\frac{1}{N^{3/2}})$ to $O(\frac{1}{N^2})$ (Remark \ref{rem:42}).  The G\"uler-type acceleration techniques can be extended to other gradient-based algorithms, enhancing their acceleration capabilities. Numerical experiments demonstrate the efficiency of these improved accelerated algorithms compared to existing methods. The corresponding stochastic versions of these algorithms have broad applications in fields such as statistics, machine learning, and data mining.

The idea of Güler-type acceleration techniques is to enhance acceleration methods by incorporating the negative term $-\norm{x^k-\hat{x}^{k-1}}^2$ in the extrapolation step. In this paper, we specifically apply Güler-type acceleration techniques to three algorithms to illustrate their advantages. However, this technique can be extended to a wider range of gradient-based algorithms. Additionally, further research is needed to investigate whether this technique can be effectively applied to accelerate non-convex gradient-based algorithms.

\section*{Acknowledgments}
	We extend our gratitude to Prof. Guanghui Lan, Prof. Yuyuan Ouyang, and Prof. Bo Wen for generously providing us with valuable source codes.

\section*{Declarations}
{\bf Funding:} This research is supported by the Natural Science Research Start-up Foundation of Recruiting Talents of Nanjing University of Posts and Telecommunications (Grant NY225056), the National Natural Science Foundation of China (Grant 12331011), and the Key Laboratory of NSLSCS, Ministry of Education.

\noindent {\bf Data availability:} Data will be made available on reasonable request.

\noindent {\bf Conflict of interest:} The authors declare that they have no conflict of interest.

	\bibliographystyle{unsrt}
	\bibliography{reference}
\end{document}